\newtheorem{theorem}{Theorem}[section] 
\newtheorem{lemma}[theorem]{Lemma}
\newtheorem{prop}[theorem]{Proposition}
\newtheorem{remark}[theorem]{Remark}
\numberwithin{equation}{section}
\newcommand{\lam}{\lambda}
\newcommand{\eps}{\varepsilon}
\newcommand{\x}{\mathbf{x}}
\newcommand{\z}{\mathbf{z}}
\newcommand{\e}{\mathbf{e}}
\newcommand{\f}{\mathbf{f}}
\newcommand{\g}{\mathbf{g}}
\newcommand{\bu}{\mathbf{u}}
\newcommand{\bk}{\mathbf{k}}
\newcommand{\y}{\mathbf{y}}
\newcommand{\w}{\mathbf{w}}
\newcommand{\X}{\mathbf{X}}
\newcommand{\bv}{\mathbf{v}}
\newcommand{\0}{\mathbf{0}}
\newcommand{\I}{\mathbb{I}}
\newcommand{\R}{\mathbb{R}}
\newcommand{\Z}{\mathbb{Z}}
\begin{document}

\title{The Effective Radius of Self Repelling Elastic Manifolds}
\author{Carl Mueller}
\address{Carl Mueller
\\Department of Mathematics
\\University of Rochester
\\Rochester, NY  14627}
\email{carl.e.mueller@rochester.edu}
\author{Eyal Neuman}
\address{Eyal Neuman
\\Department of Mathematics
\\Imperial College
\\London, UK}
\email{e.neumann@imperial.ac.uk}
\thanks{CM was partially supported by Simons Foundation grant 513424.} 
\keywords{Gaussian free field, self-avoiding, elastic manifold}
\subjclass[2020]{Primary, 60G60; Secondary,  60G15.}

\begin{abstract} We study elastic manifolds with self-repelling 
terms and estimate their effective radius. This class of 
manifolds is modelled by a self-repelling vector-valued Gaussian free field 
with Neumann boundary conditions over the domain $[-N,N]^d\cap \mathbb{Z}^d$, 
that takes values in $\mathbb{R}^d$. Our main result states that in two 
dimensions ($d=2$), the effective radius $R_N$ of the manifold is
approximately $N$.  This verifies the conjecture of Kantor, Kardar and Nelson \cite{Kardar_87} up to a logarithmic correction. Our results in $d\geq 3$ give a similar lower bound on 
$R_N$ and an upper of order $N^{d/2}$.  This result implies that 
self-repelling elastic manifolds undergo a substantial stretching at any 
dimension.   
\end{abstract}

\maketitle

\section{Introduction}
\label{section:introduction}

\subsection{Motivation}

The Gaussian free field (GFF) has become a central object in probability, 
studied for its own sake and with connections to several areas of physics.  
In pure probability, GFF plays the role of Brownian motion with 
multidimensional time.   In Euclidean field theory, GFF represents a field 
of noninteracting particles, with interacting models such as the $\phi^4$ 
field theory arising through a change of measure, often requiring 
renormalization.  In fact GFF over $\R^d$ with $d\ge2$ is valued
in the space of Schwartz distributions and cannot be realized as a function, 
so its fourth power $\phi^4$ is undefined in the usual sense 
(see Biskup \cite{Bis2020}).

In this paper we take inspiration from a different set of physical models, 
elastic manifolds (see Mezard and Parisi \cite{MP92} and Balents and Fisher
\cite{BF93}).  Here GFF is vector-valued, and represents the position of a random surface.  
To avoid using Schwartz distributions as mentioned above, we discretize the 
domain of GFF, and use the notation DGFF (discrete GFF) for the 
resulting model.  

If the domain of DGFF is one dimensional, then we have a well-known model of 
a random polymer.  In this context, it is common to include a self-repelling 
term which reflects the fact that different parts of the polymer cannot 
occupy the same position.  A typical object of study is the end-to-end 
distance of such a polymer, or the closely related concept of effective 
radius.  There is a vast literature on such problems, see 
Bauerschmidt, Duminil-Copin, Goodman, and Slade \cite{BDGS12} and the 
included citations.  

In the context of random surfaces, GFF's are also called elastic manifolds.  
The purpose of this paper is to study elastic manifolds with self-repelling 
terms, and to estimate the effective radius in the case. Self-repelling elastic 
manifolds were first introduced by Kantor, Kardar and Nelson in \cite{Kardar_86} as generalizations of 
polymer models to higher dimensions, in order to capture the behaviour of sheets of covalently bonded atoms and of polymerized lipid surfaces, among others. See \cite{Kardar_86,Kardar_87,Kantor_87,Nelson_2004} and references therein for additional details. 
To our knowledge no one has studied these models in the mathematical literature. In this class of models, we define an $\mathbb{R}^D$-valued DGFF over $[-N,N]^d \cap \mathbb{Z}^d$, and use 
Neumann boundary conditions since these are most closely tied to a random 
surface with free boundary conditions.  In two dimensions, that is $d=D=2$, 
we get fairly tight bounds on the effective radius of the manifold $R_N$, which is proportional to $N$ in the upper 
and lower bounds, up to logarithmic corrections.  Our results in higher 
dimensions are not as sharp, as we derive a lower bound on $R_N$ that is proportional to $N$, but the upper bound is of order $N^{d/2}$. This proves however that  
self-repelling elastic manifolds experience a substantial stretching at any 
dimension.  

We remark that for the case where $D=d=1$ we expect $R_N$ to have the same asymptotic behaviour as a one dimensional self-repelling polymer, that is $R_N \sim N$. This result can be derived by the argument of the proof of Theorem \ref{th:main}, at least up to a logarithmic constant. We leave the details of the proof to the reader.

The case where $D<d$ was studied by the authors in a followup paper \cite{M-N-22}. 
It was proved that when 
the dimension of the domain is $d=2$ and the dimension of the range is $D=1$, 
the effective radius $R_N$ of the manifold is approximately $N^{4/3}$. The 
results for the case $d \geq 3$ and $D <d$ give a lower bound on $R_N$ of 
order $N^{\frac{1}{D} \left(d-\frac{2(d-D)}{D+2} \right)}$ and an upper 
bound proportional to $N^{\frac{d}{2}+\frac{d-D}{D+2}}$. The results of \cite{M-N-22} imply 
that self-repelling elastic manifolds with a low dimensional range undergo a 
significantly stronger stretching than in the case $d=D$, which is studied in this paper.

The remaining case, $D>d$, looks to be much harder. For example, consider the case where the domain of the self-repelling DGFF is 
$\{0,\dots,N\}$ and it takes values in $\R^D$.  For $D=2,3,4$ the behavior 
of the effective radius of the self-repelling polymer as $N\to\infty$ is 
still unknown,  although we have good information for $D=1$ and for $D>4$.  
See page 400 of Bauerschmidt, Duminil-Copin, Goodman, and Slade \cite{BDGS12} 
and also Bauerschmidt, Slade, and Tomberg, and Wallace \cite{BSTW17}.  
If $D$ is large enough, then for self-avoiding walks, the lace 
expansion can be used.  For DGFF however, there appears to be no analogue of the lace 
expansion.

We briefly compare the mathematical results for self-repelling elastic 
manifolds, which were described above, to conjectures which are available in 
the physical literature. Most of such conjectures are based on the so called 
Flory's argument. We refer to Section \ref{appendix} for a heuristic 
derivation of such result for $D=d=2$. In general this heuristic argument 
suggests that 
\begin{equation} \label{flory} 
R_N \sim N^{\nu}, \quad \textrm{ with } \nu= \frac{d+2}{D+2}. 
\end{equation} 
For the cases where $D=1, d=2$ and $D=2,d=2$ we can confirm \eqref{flory} up to a logarithmic correction by Theorem \ref{th:main} in this paper and by the results in \cite{M-N-22}. For the case where $D=3$ and $d=2$ the physical literature is inconclusive as simulations suggest that $R_N$ grows linearly in $N$ in contradiction to \eqref{flory}. Not much is known about other cases besides from these heuristic results and some arguably imprecise results using $\varepsilon$-expansions (see \cite{Kardar_87}). We refer to Chapter 10.5.2 of \cite{plischke} and references therein for additional information about Flory's argument for self-repelling manifolds and for related results on simulations.
\subsection{Setup}
Now we give more precise definitions; the reader can also consult
Biskup \cite{Bis2020}.  Since the standard definition of DGFF involves Dirichlet 
rather than Neumann boundary conditions, we give details of its construction 
below. In the following, ordinary letters such as $x,u$ take values in 
$\R$ or $\Z$, while boldface letters such as $\x,\bu$ take 
values in $\R^d$ for $d\ge2$.  

Fix $d\ge2, N\ge1$ and define our parameter set as follows.  
\begin{equation*}
S^d_{N}:=[-N,N]^d \cap \Z^d. 
\end{equation*}
Note that
\begin{equation*}
S^1_{N}:=\{-N,\dots,N\}.
\end{equation*}
Thus $S^d_N$ is a cube in $\Z^d$ centered at the origin.  Let 
$\mathcal{N}=\mathcal{N}_{N,d}$ be the set of unordered nearest neighbor 
pairs in $S^d_N$.  

Now we define the discrete Neumann Laplacian $\Delta=\Delta_{N,d,D}$ as 
follows.  Given functions $\f,\g:S_N^d\to\R^D$ we define
the energy $H$ and the inner product $(\cdot,\cdot)=(\cdot,\cdot)_{N,d,D}$ 
as follows,
\begin{equation*}
\begin{split}
H(\f,\g) &= \sum_{(\x,\y)\in\mathcal{N}}
                  \big(\f(\x)-\f(\y)\big)\cdot\big(\g(\x)-\g(\y)\big),  \\
(\f,\g) &= \sum_{\x\in S_N^d}\f(\x)\cdot\g(\x).
\end{split}
\end{equation*}
We will usually write $H(\f)$ instead of $H(\f,\f)$.  
Define the operator $\Delta=\Delta_{N,d,D}$ on such functions by the 
requirement
\begin{equation*}
H(\f,\g) = -(\f,\Delta\g).
\end{equation*}
If $D=1$, we simply write $f,g$ instead of $\f,\g$.  

We claim that $\Delta$ is a self-adjoint operator defined 
pointwise as follows.  Given $\f:S_N^d\to\R^D$, we first extend the 
domain of $\f$ to $S_{N+1}^d$.  If $\x,\y$ are nearest neighbors in 
$\Z^d$ with $\x\in S_N^d$ and $\y\not\in S_N^d$, define 
$\f(\y):=\f(\x)$.  If $\y\in S_{N+1}^d\setminus S_{N}^d$ but $\y$ is not a 
nearest neighbor of any point in $S_N^d$, let $\f(\y):=0$.  We leave it to 
the reader to use summation by parts to verify that for $\x\in S_N^d$,
\begin{equation} \label{eq:def-Laplacian}
\Delta \f(\x) = \sum_{\y\sim \x}\left[\f(\y) - 2d\cdot \f(\x)\right],
\end{equation}
where $\x\sim\y$ means that $\x,\y$ are nearest neighbors.  

In the case $D=1$, since $-\Delta$ is a self-adjoint operator on a 
finite-dimensional space, there exists a finite index set $\I=\I_{N,d}$ to 
be defined later, and an orthonormal basis of eigenfunctions 
$(\varphi_\bk)_{\bk\in\I}$ with corresponding eigenvalues 
$(\lambda_\bk)_{\bk\in\I}$.  We can assume without loss of generality that there is a 
distinguished index $\0$ such that $\varphi_\0$ is constant and that 
$\lambda_\0=0$.  

Throughout, we fix a parameter $\beta>0$, which has a physical 
interpretation as the inverse temperature.  
Let $(X^{(i)}_\bk)_{\bk\in\I\setminus\{\0\},i=1,\dots,D}$ be a collection of 
i.i.d. random variables defined on a probability space 
$(\Omega,\mathcal{F},\mathbb{P})$ such that
\begin{equation*}
X^{(i)}_\bk \sim N(0,(2\beta\lambda_\bk)^{-1}).
\end{equation*}
For each $i=1,\dots,D$ define
\begin{equation} \label{u-eigen}
u^{(i)}=\sum_{\bk\in\I\setminus\0}X^{(i)}_\bk\varphi_\bk,  \\
\end{equation}
and define DGFF as  
\begin{equation} \label{DGFF-def}
\bu=(u^{(1)},\dots,u^{(D)}).
\end{equation}

Since the definition of $\bu$ in \eqref{u-eigen} may appear unmotivated, we 
remark that it is equivalent to stating that $(\bu(\x))_{\x\in S_N^d}$ is a 
collection of $\R^D$-valued centered jointly Gaussian variables with joint 
density
\begin{equation} \label{eq:prob-density}
\frac{1}{C_{N,\beta,d,D}}\exp\left(-\beta H(\bu)\right)
= \frac{1}{C_{N,\beta,d,D}}\exp\left(\beta(\bu,\Delta\bu)\right)
\end{equation}
when $\bu$ is restricted to those $\bu$ for which
\begin{equation*}
\sum_{\x\in S_N^d}\bu(\x)=0.
\end{equation*}
Here $C_{N,\beta,d,D}$ is a normalizing constant which ensures that we have a 
probability density.  We will elaborate on the equivalence of these two 
definitions in Section \ref{section-gff-density}.

We define the local time of the field $\bu$ at level 
$\z \in \mathbb R^{D}$ as 
\begin{equation} \label{l-time}
\begin{split} 
\ell_{N}(\z, \bu) &= \#\{\x \in S^d_{N} 
       : \bu(\x) \in [\z-\mathbf{1/2}, \z+\mathbf{1/2}] \} \\
&= \sum_{\x \in S_N^d} \mathbf{1}_{\{\bu(\x) \in [\z-\mathbf{1/2}, \z+\mathbf{1/2}] \}}, 
\end{split} 
\end{equation} 
where $\mathbf{\frac{1}{2}}=(1/2,\dots,1/2)\in\mathbb{R}^D$. 

Now we define a weakly self-avoiding Gaussian free field. 
Throughout, we fix a parameter $\gamma>0$.  
If $P_{N,d,D,\beta}$ denotes the 
original probability measure of $(\bu(\x))_{\x\in S_N^d}$, 
we define the probability $Q_{N,d,D,\beta,\gamma}$ as follows.  
For ease of notation, we write $E$ for the expectation with respect to 
$P_{N,d,D,\beta}$.
%clarity, we will let $E^{P_{N,d,D,\beta}},E^{Q_{N,d,D,\beta,\gamma}}$ denote the 
%expectations with respect to $P_{N,d,D,\beta}$ and $Q_{N,d,D,\beta,\gamma}$ 
%respectively.  We write $E$ for $E^{P_{N,d,D,\beta}}$.  
Let
\begin{equation} \label{z-func} 
\begin{aligned}
\mathcal{E}_{N,d,D,\gamma}
&=\exp\left(-\gamma \int_{\mathbb R^D}\ell_N(\y, \bu)^2d\y \right),\\
Z_{N,d,D,\beta,\gamma}&=E[\mathcal{E}_{N,d,D,\gamma}]
 =E^{P_{N,d,D,\beta}}[\mathcal{E}_{N,d,D,\gamma}].
\end{aligned}
\end{equation} 
Then we define for any set $A\in\mathcal{F}$,
\begin{equation} \label{eq:def-Q}
Q_{N,d,D,\beta,\gamma}(A)=\frac{1}{Z_{N,d,D,\beta,\gamma}}E
 \big[\mathcal{E}_{N,d,D,\gamma}\mathbf{1}_A\big].
\end{equation}
For ease of notation, we will usually drop the subscripts except for 
$N$ and write
\begin{equation*}
P_N=P_{N,d,D,\beta} , \quad Q_N=Q_{N,d,D,\beta,\gamma}  , \quad 
\mathcal{E}_N=\mathcal{E}_{N,d,D,\gamma} , \quad Z_N=Z_{N,d,D,\beta,\gamma}.
\end{equation*}

Finally, we define the effective radius of the field $\bu$ as   
\begin{equation*}
R_N=  \max_{\w, \z\in S^d_N}  \|\bu(\z)-\bu(\w)\|, 
\end{equation*}
where $\|\cdot\|$ denotes the Euclidian norm.

\subsection{Statement of the main result}

Note that in our main theorem below, we assume that $D=d$.  
 
\begin{theorem}\label{th:main}
Let $\bu$ be the weakly self-avoiding DGFF on $S_N^d$ taking values in 
$\mathbb{R}^d$.  There are constants $\varepsilon_0,K_0>0$ not depending on 
$\beta,\gamma,N$ such that  
\begin{itemize} 
\item[\textbf{(i)}] For $d=2$, 
\begin{align*}
\lim_{N\to\infty} Q_{N}\Big[\varepsilon_0 \Big(\frac{\gamma}{\beta+\gamma}\Big)^{1/2}  N(\log N)&^{-1/2}
\leq R_N \\
 &\leq K_{0} \Big(\frac{\beta+\gamma}{\beta}\Big)^{1/2}  N(\log N)^{3/2}\Big]  =1.
\end{align*}
\item[\textbf{(ii)}] For $d\geq 3$, 
\begin{equation*}
\lim_{N\to\infty} Q_{N}\Big[\varepsilon_0  \Big(\frac{\gamma}{\beta+\gamma}\Big)^{1/d}N
 \leq R_N\leq K_{0}\Big(\frac{\beta+\gamma}{\beta}\Big)^{1/2} N^{d/2} \Big] =1.
\end{equation*}
\end{itemize} 
\end{theorem}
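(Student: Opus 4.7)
I would start with a purely geometric observation: on the event $\{R_N\le r\}$ the image $\bu(S_N^d)$ lies in a ball of volume $\lesssim r^d$, and Cauchy--Schwarz applied to $\ell_N\ge 0$ with $\int_{\R^D}\ell_N(\y,\bu)\,d\y=|S_N^d|$ gives
\begin{equation*}
\int_{\R^D}\ell_N(\y,\bu)^2\,d\y\;\ge\;\frac{c_d\,N^{2d}}{r^d},
\end{equation*}
hence $\mathcal{E}_N\le e^{-c\gamma N^{2d}/r^d}$ on this event and $Q_N(R_N\le r)\le Z_N^{-1}\exp(-c\gamma N^{2d}/r^d)$. The lower bound on $R_N$ then reduces to a matching lower bound on $Z_N$ of the form $Z_N\ge\exp(-C(\beta+\gamma)N^d\phi_d(N))$, with $\phi_d\equiv 1$ for $d\ge 3$ and $\phi_2(N)$ a polylogarithmic factor. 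I would prove this by localising the field in a tube $A=\{\|\bu-\mathbf{a}\|_\infty\le\delta\}$ around a stretched affine configuration $\mathbf{a}(\x):=a\x$. A direct Cameron--Martin computation, combined with the symmetry $\bv\mapsto -\bv$ and Jensen applied conditionally to the cross term $e^{-2\beta H(\bv,\mathbf{a})}$, yields
\begin{equation*}
P_N(A)\;\ge\;e^{-\beta H(\mathbf{a})}\,P_N(\|\bu\|_\infty\le\delta);
\end{equation*}
on $A$ the choice $a\gtrsim\delta$ forces neighbouring image points into distinct unit boxes, so $\ell_N\lesssim 1$ pointwise and $\mathcal{E}_N\ge e^{-C\gamma N^d}$. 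The small-ball probability is controlled by the Gaussian correlation inequality in $d\ge 3$ (where bounded DGFF variance allows $\delta=O(1)$) and by left-tail estimates for the maximum of the 2d DGFF in $d=2$ (with $\delta$ of order $(\log N)^{1/2}/\sqrt{\beta}$).

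\noindent\textbf{Upper bound on $R_N$.}
I would use a discrete Sobolev embedding, $\|\bu\|_\infty^2\le C\psi_d(N)\,H(\bu)$ for mean-zero $\bu$ on $S_N^d$ with $\psi_2(N)\lesssim\log N$ and $\psi_d\equiv 1$ for $d\ge 3$, which follows from Cauchy--Schwarz on the eigenfunction expansion of $\bu$ via $\|\bu\|_\infty^2\le H(\bu)\sum_{\bk\ne\0}\|\varphi_\bk\|_\infty^2/\lam_\bk$ together with the classical estimates $\|\varphi_\bk\|_\infty^2\lesssim N^{-d}$ and $\lam_\bk\asymp |\bk|^2/N^2$. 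Since $R_N\le 2\|\bu\|_\infty$, this reduces the task to a tail bound for $H(\bu)$ under $Q_N$, for which the key tool is the Gaussian renormalisation identity
\begin{equation*}
E_{Q_N}\!\big[e^{tH(\bu)}\big]\;=\;\Big(\frac{\beta}{\beta-t}\Big)^{\!DN'/2}\frac{Z_{N,d,D,\beta-t,\gamma}}{Z_{N,d,D,\beta,\gamma}},\qquad 0<t<\beta,
\end{equation*}
obtained by writing $e^{tH(\bu)-\beta H(\bu)}=e^{-(\beta-t)H(\bu)}$ and recognising the result as the partition function at inverse temperature $\beta-t$. Bounding the numerator by $1$ and the denominator by the $Z_N$ estimate of the previous paragraph, an optimised Chernoff argument (for instance with $t=\beta/2$) yields $H(\bu)\le C((\beta+\gamma)/\beta)N^d\phi_d(N)$ with $Q_N$-probability tending to $1$. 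Plugging this into the Sobolev inequality produces $R_N\lesssim((\beta+\gamma)/\beta)^{1/2} N^{d/2}\sqrt{\psi_d(N)\phi_d(N)}$, matching both parts (i) and (ii) of the theorem.

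\noindent\textbf{The main obstacle.}
The hardest step is the $Z_N$ lower bound in dimension $d=2$: the tube width $\delta$ must be at least of order $(\log N)^{1/2}/\sqrt{\beta}$ (the typical maximum fluctuation scale of the 2d DGFF) so that $P_N(\|\bu\|_\infty\le\delta)$ does not vanish super-polynomially, while the stretching $a$ must be at least of order $\delta$ to keep the image points separated. Both constraints inflate $\beta H(\mathbf{a})\asymp \beta a^2 N^2$ by a polylogarithmic factor, and tracking the correct power of $\log N$ through both the geometric inequality and the Chernoff bound---so as to produce precisely the $(\log N)^{-1/2}$ lower and $(\log N)^{3/2}$ upper exponents in Theorem~\ref{th:main}(i)---is where the bulk of the technical work lies.
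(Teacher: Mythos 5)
Your overall architecture is sound, and the upper-bound route is genuinely different and in fact nicer than the paper's, but the $Z_N$ lower bound as you sketch it has a real loss that the paper's argument avoids.

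The paper's $Z_N$ bound (Proposition~\ref{prop-z}) never conditions on a small-ball event. It applies Jensen directly to $\log Z_N=\log\hat E^{(a)}[\cdots]$ under the drifted measure $\hat P^{(a)}$, writes the result as $-(I_1+I_2)$, and then \emph{computes} $I_1=\gamma\,\hat E^{(a)}\!\left[\int\ell_N^2\right]\lesssim\gamma\big(a^{-2}(\log N)^2\vee1\big)N^2$ and $I_2\lesssim\beta a^2N^d$. Taking $a^2=\log N$ gives $\log Z_N\gtrsim-(\beta+\gamma)N^2\log N$ with an explicit $(\beta+\gamma)$ prefactor. Your tube argument instead needs two constraints simultaneously: $a\gtrsim\delta+1$ so that $\ell_N\lesssim1$ pointwise on $A$, and $\delta\gtrsim\sigma$ (up to logarithms) so that $P_N(\|\bu\|_\infty\le\delta)$ does not decay faster than $e^{-cN^2}$, where $\sigma\asymp\log N/\sqrt{\beta}$ is the fluctuation scale coming from Proposition~\ref{lem-cov}(i). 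Those two constraints force $\beta a^2N^2\gtrsim N^2(\log N)^2$ with no $\beta$ left over, and the small-ball factor contributes an additional term that also carries no $\beta$ or $\gamma$. The resulting bound is of the form $\log Z_N\gtrsim-N^2(\log N)^2$ rather than $-(\beta+\gamma)N^2\log N$. This is not cosmetic: feeding it into your Cauchy--Schwarz small-distance estimate produces a lower bound on $R_N$ of order $N/\log N$ rather than $N(\log N)^{-1/2}$, and forces the constant $\varepsilon_0$ to depend on $\beta,\gamma$, contradicting the theorem's claim that $\varepsilon_0,K_0$ are absolute. The Jensen-on-$\log$-with-drift device (the content of Sections~\ref{sec-part}--\ref{sec-pf-z-2}) is precisely what lets the paper keep the $\beta a^2$ and $\gamma a^{-2}(\log N)^2$ contributions decoupled and optimise $a$ without ever paying a small-ball cost, and it does not appear to be recoverable from your conditioning scheme.

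By contrast, your upper-bound route is an improvement over the paper's. The identity $E_{Q_N}[e^{tH(\bu)}]=(\beta/(\beta-t))^{dN(d)/2}\,Z_{N,\beta-t,\gamma}/Z_{N,\beta,\gamma}$ is correct (check it against \eqref{z-n} and \eqref{eq:def-Q}), and combining it with $Z_{N,\beta-t,\gamma}\le1$, the $Z_N$ lower bound, and the Sobolev-type estimate $\|\bu\|_\infty^2\le H(\bu)\sum_{\bk\ne\0}\|\varphi_\bk\|_\infty^2/\lambda_\bk\lesssim\psi_d(N)H(\bu)$ (valid since $\|\varphi_\bk\|_\infty^2\lesssim N^{-d}$ and $\lambda_\bk\asymp|\bk|^2/N^2$) gives a pathwise control of $R_N\le2\|\bu\|_\infty$ that avoids the paper's union bound over $\sim N^{2d}$ pairs in Section~\ref{sec-large-t}. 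With the paper's sharp $Z_N$ bound this route would in fact give $R_N\lesssim((\beta+\gamma)/\beta)^{1/2}N\log N$ in $d=2$, sharper than the stated $(\log N)^{3/2}$. So if you replace your small-ball step by the paper's Jensen-with-drift computation for $Z_N$, the rest of your argument goes through and slightly improves part (i).
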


\begin{remark}
Here is the reason we restrict ourselves to the case of $\bu$ taking values 
in $\mathbb{R}^D$ with $D=d$.  As 
mentioned in the introduction, there is good information about the radius 
of the self-repelling random polymer taking values in $\mathbb{R}^D$ for 
$D=1$, but not for $D=2,3,4$.  This is because we can guess that for $D=1$, 
a self-avoiding polymer has ballistic behavior, i.e. $u(x)\approx Cx$ 
roughly speaking.  In higher dimensions, it is hard to guess what shape 
the polymer might take.  There are results for $d>4$ using the lace expansion
(see den Hollander \cite{dH09}, Chapter 4),
but this method seems hard to adapt to elastic manifolds.  
However, if $D=d$, then we can guess that the self-repelling elastic 
manifold should stretch itself by dilation in all directions so that 
$\bu(\x)\approx C\x$.  This guess allows us to carry out the analysis.  
\end{remark}

\begin{remark}
Note that when $\gamma=0$, i.e. the self-repelling penalization in \eqref{eq:def-Q} is set zero, the following statement holds for any $d\geq 2$. There exists $K>0$ large enough such that,   
\begin{equation*}
\lim_{N\to\infty} P_{N,d,D,\beta}\big( R_N \leq K \beta^{1/2}\log N\big)  =1.
\end{equation*}
This result follows from Proposition \ref{lem-cov} and by repeating the same steps as in the proof of the upper bound in Theorem 2.1 of \cite{Bis2020}. Theorem \ref{th:main} therefore suggests that self-repelling elastic manifolds undergo a substantial stretching at any dimension.       
\end{remark}

 \begin{remark} 
 Theorem \ref{th:main} verifies the conjecture by Kantor, Kardar and Nelson in \cite{Kardar_87} for the case where $d=2$ and $D=2$, up to a logarithmic correction. Although in the model that was presented in \cite{Kardar_87} the DGFF is defined on the triangular lattice, the heuristics that yields their result is based on Flory's argument which also applies for the rectangular lattice.   
 \end{remark} 

\subsection{Outline of the proof}
We describe the outline for the case $d=2$ as the proof for $d\geq 3$ follows similar lines. Define the following events.  
\begin{equation} \label{events}
\begin{aligned} 
A^{(1)}_N&=\left\{R_N>K_{0}  \Big(\frac{\beta+\gamma}{\beta}\Big)^{1/2} N(\log N)^{3/2} \right\}, \\
A^{(2)}_N&=\left \{R_N<\varepsilon_0 \Big(\frac{\gamma}{\beta+\gamma}\Big)^{1/2} N(\log N)^{-1/2} \right \}. 
\end{aligned}
\end{equation}
It suffices to show that for $i=1,2$ we have
\begin{equation*}
\lim_{N\to\infty}Q_{N}\big(A^{(i)}_N\big)=0 .
\end{equation*}

From \eqref{eq:def-Q} we see that it is enough to find:
\begin{enumerate}
\item a lower bound on $Z_N$, derived in Section \ref{sec-part},
\item and an upper bound on 
$E^{P_N}\big[\mathcal{E}_N\mathbf{1}_{A^{(i)}_N}\big]$ 
for $i=1,2$, obtained in Sections \ref{sec-large-t} and \ref{sec-small}, respectively. 
\end{enumerate}
Finally, the upper bounds divided by the lower bound should vanish 
as $N\to\infty$.  

\section{ Lower Bound on the Partition Function}  \label{sec-part} 
In this section we derive the following lower bound on $ Z_{N}$. 
\begin{prop} \label{prop-z} 
Let $\beta>0$. Then there exists a constant $C>0$ not depending on $N$, $\beta$ and $\gamma$ such that  
\begin{itemize} 
\item[\textbf{(i)}] for $d=2$, 
\begin{equation*} \label{z-bnd-fin} 
\log  Z_{N} \geq  -C(\beta+\gamma) N^2\log N. 
\end{equation*}
\item [\textbf{(ii)}] for $d\geq 3$, 
 \begin{equation*} \label{z-bnd-fin-3} 
\log  Z_{N} \geq  -C(\beta+\gamma) N^d. 
\end{equation*} 
\end{itemize} 
\end{prop}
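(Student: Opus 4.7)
The plan is to lower-bound $Z_N$ by tilting the field around a deterministic linearly stretched configuration and invoking Jensen's inequality. Define $\bv : S_N^d \to \R^d$ by $\bv(\x) := c\x$ for a fixed constant $c \geq 2$; by the symmetry of $S_N^d$ about the origin, $\sum_{\x \in S_N^d}\bv(\x) = 0$, so $\bv$ lies in the mean-zero subspace supporting $P_N$. Performing the Cameron--Martin type change of variable $\bu = \bv + \bw$ in the integral defining $Z_N$ and using the identity $H(\bv+\bw) = H(\bv) + 2H(\bv,\bw) + H(\bw)$ yields
\begin{equation*}
Z_N = e^{-\beta H(\bv)}\, E^{P_N}\!\big[\mathcal{E}_N(\bv+\bw)\, e^{-2\beta H(\bv,\bw)}\big].
\end{equation*}
Since $H(\bv,\bw) = -(\Delta\bv,\bw)$ is linear in $\bw$ and $E^{P_N}[\bw] = 0$, Jensen's inequality $E[e^X] \geq e^{E[X]}$ applied to the exponent inside the expectation gives the clean bound
\begin{equation*}
\log Z_N \;\geq\; -\beta H(\bv) \;-\; \gamma\, E^{P_N}\!\Big[\int_{\R^d}\ell_N(\y,\bv+\bw)^2\, d\y\Big].
\end{equation*}

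The first term is immediate: $\beta H(\bv) = c^2 \beta |\mathcal{N}| \leq C\beta N^d$. For the second, writing $Q := [-1/2,1/2]^d$ and expanding the square,
\begin{equation*}
\int_{\R^d}\ell_N(\y,\bu)^2\, d\y = \sum_{\x,\x'}\mathrm{Vol}\!\big((\bu(\x)+Q)\cap(\bu(\x')+Q)\big) \leq N^d + \sum_{\x\neq\x'}\mathbf{1}_{\{|\bu(\x)-\bu(\x')|_\infty \leq 1\}}.
\end{equation*}
Substituting $\bu = \bv+\bw$, taking expectations, and using independence of the $d$ coordinates of $\bw$ under $P_N$,
\begin{equation*}
E^{P_N}\!\Big[\int \ell_N^2 d\y\Big] \leq N^d + \sum_{\x\neq\x'}\prod_{i=1}^d P\!\big(|c(x_i-x_i')+w_i(\x)-w_i(\x')| \leq 1\big).
\end{equation*}
Each factor is controlled by a standard Gaussian density estimate: letting $\sigma^2(\x,\x') := \mathrm{Var}(w_i(\x)-w_i(\x'))$,
\begin{equation*}
P\!\big(|c(x_i-x_i')+w_i(\x)-w_i(\x')|\leq 1\big) \leq \frac{C}{\sigma(\x,\x')}\exp\!\Big(-\frac{(c|x_i - x_i'|-1)_+^2}{2\sigma^2(\x,\x')}\Big).
\end{equation*}
For $c \geq 2$ and integer $|x_i - x_i'| \geq 1$ one has $c|x_i - x_i'| - 1 \geq |x_i - x_i'|$, so $\sum_i (c|x_i - x_i'|-1)_+^2 \geq |\x-\x'|^2$. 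Using Proposition \ref{lem-cov} to bound $\sigma^2(\x,\x') \leq C/\beta$ in $d \geq 3$ and $\sigma^2(\x,\x') \leq C\log(|\x-\x'|+2)/\beta$ in $d=2$, and organizing the sum by shells of radius $r = |\x-\x'|$, the exponential factor produces a rapidly convergent series yielding $E^{P_N}[\int \ell_N^2\, d\y] \leq CN^d \log N$ in $d=2$ and $\leq CN^d$ in $d \geq 3$. Combining these bounds gives the claim.

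The main technical obstacle is the shell-by-shell estimate of the local-time sum in dimension two, because the variance $\sigma^2(\x,\x')$ supplied by Proposition \ref{lem-cov} grows logarithmically in $|\x-\x'|$; this logarithmic blow-up of the Neumann Green's function is the source of the extra $\log N$ factor in the stated bound. A secondary delicate point is calibrating the constant $c$ (potentially allowing it to depend mildly on $\beta$) so that the Gaussian drift dominates the field uniformly for all $\beta,\gamma > 0$, ensuring that the absolute constant $C$ in the conclusion is genuinely independent of the temperature and coupling parameters.
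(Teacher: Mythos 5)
Your overall strategy is the right one and matches the paper's: tilt the field by a linear map $\bv(\x)=c\x$ via a Cameron--Martin change of measure, use Jensen's inequality on the resulting exponential, and split the bound into a deterministic energy term $\beta H(\bv)$ plus the expected self-intersection integral under the tilted law. The $d\geq 3$ case you outline does close. However, there is a genuine gap in the $d=2$ case, coming from a mismatch between the variance estimate you invoke and what Proposition \ref{lem-cov} actually supplies, compounded by your choice of a fixed drift constant $c$.

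Proposition \ref{lem-cov}(i) gives only the uniform bound $\mathrm{Var}(u^{(i)}(\z)-u^{(i)}(\w))\leq C_2\beta^{-1}(\log N)^2$; it does \emph{not} give the distance-dependent bound $\sigma^2(\x,\x')\leq C\beta^{-1}\log(|\x-\x'|+2)$ that your shell computation relies on. That sharper estimate is morally true (it reflects the $\log$ growth of the planar Green's function) but would require a separate proof, and in fact the paper's own derivation of Proposition \ref{lem-cov} produces the weaker $(\log N)^2$ precisely because it picks up an extra $\log N$ from the reflection count $\ell(N)$. If you use the uniform $(\log N)^2$ bound with a constant drift $c$, the one-dimensional Gaussian sum $\sum_{n\neq 0}\sqrt{\beta}\exp(-\beta c^2 n^2/(2C_2(\log N)^2))$ is of order $\log N$, so the two-dimensional sum gives $E^{P_N}[\int\ell_N^2]\lesssim N^2(\log N)^2$, not $N^2\log N$. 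Combined with $\beta H(\bv)\lesssim\beta N^2$ this yields $\log Z_N\gtrsim-(\gamma(\log N)^2+\beta)N^2$, which is strictly weaker than the claimed $-C(\beta+\gamma)N^2\log N$ once $\log N$ is large. (Also note a small internal inconsistency: if your distance-dependent variance bound were available, the shell sum would actually give $O(N^2)$, not the $O(N^2\log N)$ you state.)

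The paper resolves this by letting the drift coefficient grow: taking $a^2=\log N$ (in your notation $c\sim\sqrt{\log N}$) for $d=2$. This shrinks the Gaussian width relative to the lattice, giving $I_1\lesssim\gamma N^2\log N$ while the energy cost becomes $I_2\lesssim\beta a^2 N^2=\beta N^2\log N$, so the two terms balance at exactly $(\beta+\gamma)N^2\log N$. To repair your argument you should either (a) replace the fixed $c$ by $c=\sqrt{\log N}$ and redo the shell estimate with the uniform $(\log N)^2$ variance bound, which makes the arithmetic close, or (b) prove the sharper distance-dependent variance estimate for the Neumann DGFF, which is extra work not needed if you follow (a). Your worry about tuning $c$ against $\beta$ is not actually a problem: the $\sqrt{\beta}$ from the density prefactor cancels against the $1/\sqrt{\beta}$ width of the Gaussian, so the per-shell contribution is bounded uniformly in $\beta$, and $c$ (or $a$) need only depend on $N$, not on $\beta$ or $\gamma$.
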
 
In order to prove Proposition \ref{prop-z} we will introduce some additional definitions and auxiliary lemmas.

\subsection{The probability density and associated random walk} 
Note that \eqref{eq:def-Laplacian} implies that $\Delta_{N,d,1}$ is the 
generator of a continuous-time simple random walk $\X=(\X_t)_{t\geq 0}$ 
which is reflected at the boundary of $S^d_N$.  More precisely, if $\X$ 
attempts to leave $S^d_N$, then it stays where it is.  Later we will use $\X$ 
to obtain the lower bounds in Proposition \ref{prop-z}.

\subsection{The probability density of the GFF} \label{section-gff-density} 

Now we study the probability density formula \eqref{eq:prob-density} in more 
detail.  We focus on $D=1$, and note that $\Delta=\Delta_{N,d,1}$ is nonpositive definite.  As in 
\eqref{u-eigen}, we need to take into account the $0$ eigenvalue of $\Delta$, 
which has a constant eigenfunction $\varphi_\textbf{0}$ with 
$\textbf{0} = (0,\dots,0)\in S_N^d$.  In order to do that we let 
$\mathbf{V}^+=\mathbf{V}^+_{N,d}$ 
be the vector space with the standard basis $\{\mathbf{e}_\x: \x\in S^d_N\}$.  
We denote vectors in $\mathbf{V}^+$ as having one component $v_\x$ for each 
position $\x\in S^d_N$, so that $\bv=\sum_{\x\in S_N^d}v_\x\e_\x$.  Let
\begin{equation} \label{v-def} 
\mathbf{V} = \mathbf{V}_{N,d}
 = \left\{\bv\in\mathbf{V}^+:\sum_{\x\in S^d_N}v_\x=0\right\}.
\end{equation}

Next, we wish to put a natural measure on $\mathbf{V}$.  Clearly $\mathbf{V}$ 
is a subspace of $\mathbf{V}^+$, so we can choose an orthonormal basis 
$\{\mathbf{b}_k\}_{k=1}^{(2N+1)^d-1}$ such that each basis element is 
perpendicular to the constant vector $\sum_{\x\in S^d_N}\mathbf{e}_\x$.   
Using such a basis, we can construct Lebesgue measure on $\mathbf{V}$ in the 
usual way, to be translation invariant.  Also, any such orthonormal basis 
gives rise to the same measure, which we denote as $\mu=\mu_{N,d}$.  

Then we can define DGFF on $\mathbf{V}$ having density with respect to $\mu$ 
given by \eqref{eq:prob-density} with $D=1$. Note that if we were to use $\mathbf{V}^+$, then 
\eqref{eq:prob-density} (without the normalizing constant) would integrate 
to $\infty$.  The extension of  \eqref{eq:prob-density} to $D\geq 2$ is done 
by taking the product of the $D=1$ densities of $(u^{(i)})_{i=1}^D$, which 
are i.i.d, and then using \eqref{eq:def-Laplacian}. 
 
\subsection{The orthonormal function basis}  \label{sec-basis} We first 
specify the orthonormal basis $\{\varphi_{\bk}\}$ in \eqref{u-eigen} of 
eigenfunctions of $\Delta_{N,d,1}$ on 
$[-N, N]^d \cap \mathbb{Z}^d$ taking values in $\mathbb{R}$.  We note that 
each basis function $\varphi_{\bk}$ can be represented as a product of $d$ 
functions $\phi_{j}: \mathbb R^d \rightarrow \mathbb R$, as follows 
\begin{equation} \label{eigen}
\varphi_{\bk} (\x) = \phi_{k_1}(x_1)\dots\phi_{k_d}(x_d), 
\end{equation} 
where $\x=(x_1,\dots,x_d)$ and $\bk=(k_1,\dots,k_d)$, $-N \leq k_i \leq N$, 
and $1\leq i\leq d$.  Here $\{\phi_j\}_{j=-N}^N$ is an orthonormal basis of 
eigenfunctions of $-\Delta_{N,1,1}$, the Laplacian with Neumann boundary 
conditions on $S_N^1=\{-N,\dots,N\}$. 
Note that if $\lambda_{\bk}$ is the eigenvalue of $\varphi_{\bk}$ and 
$\lambda_k$ is the eigenvalue corresponding to $\phi_k$, then
satisfies
\begin{equation} \label{eigen-rel} 
 \lambda_{\bk} = \sum_{i=1}^d \lambda_{k_i}. 
\end{equation} 

We can explicitly define these eigenfunctions and eigenvalues as follows.  
Let
\begin{equation*}
\phi_{0}(x)=(2N+1)^{-1/2}, 
\end{equation*}
and for $k=1,\dots,N$ denote
\begin{equation} \label{phi-sin} 
\phi_{k}(n)=\frac{1}{a_{k,N}}\sin\left(\frac{(2k-1)\pi}{2N+1}n\right),
\end{equation}
where $a_{k,N}$ is a normalizing constant so that
\begin{equation*}
\sum_{n=-N}^{N}\phi^2_{k}(n)=1.
\end{equation*}
A calculation (perhaps using Wolfram Alpha) yields
\begin{equation*} 
a_{k,N} = \Big(  \frac{1}{2} \csc\left(\frac{( 2 k-1) \pi }{( 2 N+1)}\right) \sin(2 k \pi )+N+\frac{1}{2}   \Big)^{1/2} 
= \Big(  N+\frac{1}{2}   \Big)^{1/2}, 
\end{equation*}
since $k$ is an integer.  

For $k=1,..,N$ we further define 
 \begin{equation*}
  \phi_{-k}(n)=\frac{1}{b_{k,N}}\cos\left(\frac{2k\pi}{2N+1} n \right),
 \end{equation*}
where as in the previous case, $b_{k,N}$ is a normalizing constant such that
\begin{equation*}
\sum_{n=-N}^{N}\phi^2_{-k}(n)=1.
\end{equation*}
As before, we can compute
\begin{equation*}
b_{k,N} = \left( \frac{1}{2} \csc\left(\frac{2 k \pi}{2 N+1}\right) \sin(2 k \pi) +N +\frac{1}{2} \right)^{1/2}= \left(  N +\frac{1}{2} \right)^{1/2}, 
\end{equation*}
since $k$ is an integer. 

Our basis comprises all such combinations as in \eqref{eigen}, excluding the constant eigenfunction 
\begin{equation*}  
 \varphi_{(0,\dots,0)} (\x) = \phi_0(x_1)\dots\phi_{0}(x_d). 
\end{equation*} 
We denote by $N(d)$ the number of function in our basis,
\begin{equation*}
 N(d) = |S_N^d|-1= (2N+1)^d-1. 
\end{equation*}

\subsection{Incorporating drift} 
 
From \eqref{eq:prob-density} it follows that $C_{N,\beta,d,D}$ is given by 
\begin{equation} \label{z-n} 
\begin{aligned} 
C_{N,\beta,d,D} &= \int_{\mathbb{R}^{N(d)} }
  \exp\left(-\sum_{i=1}^{d}\sum_{k=1}^{N(d)}
   \frac{(x^{(i)}_k)^2}{2(2\beta\lambda_k)^{-1}}\right) 
   \prod_{i=1}^{d} \prod_{k=1}^{N(d)}dx^{(i)}_k\\
&=  \frac{1}{(2\beta)^{dN(d)/2}}\prod_{k=1}^{N(d)}\frac{1}{\lambda_k^{d/2}}. 
\end{aligned}  
\end{equation}
 
Next we incorporate a linear drift into each of the components $u^{(i)}$ of 
$\bu$, calling the resulting component $u^{(i)}_a$.  
\begin{equation*} 
u^{(i)}_a(\x)
  =\sum_{k=1}^{N(d)}X^{(i)}_\bk\varphi_\bk(\x) + ax_i, \quad i=1,\dots,d
\end{equation*}
where $a>0$ is a constant to be determined later. 

Using \eqref{u-eigen} and \eqref{sec-basis}, we get
\begin{equation} \label{u-drf}
u_a^{(i)}(\x) =\sum_{(k_1,\dots,k_d) \in S_N^d \setminus \{\mathbf{0}\}  } X^{(i)}_{k_1,\dots,k_d}\phi_{k_1}(x_1)\dots.\phi_{k_d}(x_d) +ax_i. 
\end{equation}
where $\mathbf{0} = (0,\dots,0)\in\mathbb{Z}^d$. 

Regarding $x_i$ as a function on $S_N^1=\{-N,\dots,N\}$ and expanding it 
in terms of our eigenfunctions, we see that there are coefficients 
$\alpha_j^{(i)}$ such that 
\begin{equation} \label{u-drf2}
x_i=  (\phi_{0})^{d-1} \sum_{j\in S_N^1\setminus \{0\}}  \phi_{j}(x_i) \alpha_{j}^{(i)}, 
\end{equation}
where we have included $(\phi_0)^{(1-d)}$ for convenience in later 
calculations.  Recall that 
\begin{equation*}
\phi_0=\phi_{0}(x)=(2N+1)^{-1/2}. 
\end{equation*}
In \eqref{u-drf2}, we do not include $j=0$ because $x_i$ is orthogonal to 
the constant function $\phi_0$.  

Next we find $\alpha_j^{(i)}$ in \eqref{u-drf2}. Since 
$\alpha_j^{(i)}$ are used to expand the function $f(x)=x$ for each 
coordinate $i$, we can omit the superscript $i$ and write just $\alpha_j$ in 
what follows.  Since $\{ \phi_j \}_{j=-N}^N$ forms an orthonormal basis, we 
get  
\begin{equation}  \label{f-coef} 
\alpha_{j} =\phi_0^{(1-d)} \sum_{n=-N}^{N}n \phi_j(n), \quad j\not =0, \quad \textrm{and } \quad \alpha_{0}=0. 
\end{equation} 

From \eqref{u-drf} and \eqref{u-drf2} we have  
 \begin{equation*}
 \begin{aligned} 
u_a^{(i)}(\x)& =\sum_{(k_1,\dots,k_d) \in S_N^d \setminus \{\textbf{0}\}   } X^{(i)}_{k_1,\dots,k_d}\phi_{k_1}(x_1)\dots.\phi_{k_d}(x_d) \\
&\quad +a\phi_{0}^{d-1} \sum_{j \in S_N^1 \setminus \{0\}}  \phi_{j}(x_i) \alpha_{j}. 
\end{aligned} 
\end{equation*}
We can represent $u_a^{(i)}$ as follows: 
\begin{equation} \label{u-drift} 
 \begin{aligned} 
u_a^{(i)}(x)& =\sum_{(k_1,\dots,k_d) \in S_N^d \setminus \{j\e_i: \, j\in S_N^1\}} X^{(i)}_{k_1,\dots,k_d}\phi_{k_1}(x_1)\cdots\phi_{k_d}(x_d) \\
&\quad +\phi_{0}^{d-1} \sum_{j \in S_N^1\setminus \{0\}}  (X_{j\e_i}^{(i)} +a \alpha_{j}^{(i)}) \phi_{j}(x_i). 
\end{aligned} 
\end{equation}
For $i=1,\dots,d$ let $\x^{(i)} \in \mathbf{V}$ and define 

\begin{equation*}
F(\bold x^{(1)},\dots,\bold x^{(d)}) = \sum_{i=1}^{d}\sum_{(k_1,\dots,k_d)\in S_N^d\setminus\{\mathbf{0}\}}\frac{(x^{(i)}_{k_1,\dots,k_d})^2}{2(2\beta\lambda_{k_1,\dots,k_d})^{-1}}.
\end{equation*}
We rewrite $Z_{N}$ in \eqref{z-func} as follows.  We should emphasize that 
the local time $\ell_N$ is random and hence a function of the random 
variables $(X^{(i)}_{\bk})$, hence we can write
\begin{equation*}
g_N(\y,(X_{\bk}^{(i)})) 
:= \ell_N\left( \y, \bu\right)
\end{equation*}
where for readability we have omitted the specification that $i=1,\dots,d$ 
and $\bk\in S_N^d\setminus\{\mathbf{0}\}$. We get that,  
\begin{equation} \label{z-hat-mod} 
\begin{aligned}
Z_{N}
&=  \int_{\mathbb{R}^{N(d)}} \exp\left(-F(\bold x^{(1)},\dots,\bold x^{(d)})  -\gamma \int_{\mathbb{R}^d} g_N^{2}\big(\y,(\x^{(i)})\big)d\y\right) \\
&\quad \times \prod_{i=1}^d\prod_{(k_1,\dots,k_d)\in S_N^{d}\setminus\{\textbf{0}\}}dx_{k_1,\dots,k_d}^{(i)}  . 
\end{aligned}
\end{equation}  
In order to find the Radon-Nikodym derivative that allows the drift addition in \eqref{u-drift} we note that, 
\begin{equation} \label{sums-sums} 
\begin{aligned} 
&\sum_{i=1}^{d}\sum_{(k_1,\dots,k_d)\in S_N^{d}\setminus\{\textbf{0}\}}\frac{(x^{(i)}_{k_1,\dots,k_d})^2}{2(2\beta\lambda_{k_1,\dots,k_d})^{-1}}  \\ 
&= \sum_{i=1}^{d} \bigg( \sum_{ (k_1,\dots,k_d) \in S_N^d \setminus \{j\textbf{e}_{i}: \, j\in S_N^1 \}}\frac{(x^{(i)}_{k_1,\dots,k_d})^2}{2(2\beta\lambda_{k_1,\dots,k_d})^{-1}}  \\
&\qquad  + \sum_{j\in S_N^1 \setminus \{0\}}\frac{(x^{(i)}_{ j\e_{i}} +a\alpha_{j})^2}{2(2\beta\lambda_{j\e_i})^{-1}} -\sum_{j\in S_N^1 \setminus \{0\}}  \frac{2a\alpha_{j}x^{(i)}_{j\e_i}+(a\alpha_{j})^{2}}{2(2\beta\lambda_{j\e_i})^{-1}}  \bigg).\end{aligned} 
\end{equation}  

We therefore define $\hat P^{(a)}$ (resp. $\hat E^{(a)}$) be the measure 
(expectation) under which $u$ is shifted as in \eqref{u-drift}. Then 
\eqref{z-hat-mod} and \eqref{sums-sums} imply
\begin{equation} \label{hat-P-meas} 
\frac{d\hat P^{(a)}}{dP}= \exp \bigg(-\sum_{i=1}^{d}\sum_{j\in  S_N^1 \setminus \{0\}}  \frac{2a\alpha_{j}x^{(i)}_{j\e_i}+(a\alpha_{j})^{2}}{2(2\beta\lambda_{ j\e_i})^{-1}} \bigg) .
\end{equation}
 
We can therefore rewrite  $Z_N$ in \eqref{z-hat-mod} as follows,  
\begin{equation} 
\begin{split}
Z_{N}
&= \hat E^{(a)}\Bigg[  \exp\bigg( \sum_{i=1}^{d}\sum_{j\in S_N^1 \setminus \{0\}}  \frac{2a\alpha_{j}X^{(i)}_{j\e_i}+(a\alpha_{j})^{2}}{2(2\beta\lambda_{j\e_i})^{-1}} \\
& \hspace{4cm} -\gamma \int_{\mathbb{R}^d} \ell_N^{2}(\y, \bu)d\y\bigg) \Bigg]. 
\end{split}
\end{equation}   
 
We define  
\begin{equation} \label{y-rv} 
Y_{j\e_i}^{(i)} =    \frac{2a\alpha_{j}X^{(i)}_{j\e_i}+(a\alpha_{j})^{2}}{2(2\beta\lambda_{j\e_i})^{-1}}.
\end{equation}

Using Jensen's inequality, we get that 
\begin{equation} \label{log-z} 
\begin{aligned}
\log   Z_{N}  
&\geq \hat E^{(a)}\left[-\gamma \int_{\mathbb{R}^d} \ell_N^{2}(\y,\bu)d\y\right] 
  - \hat E^{(a)}\left[- \sum_{i=1}^d\sum_{j\in S_N^1\setminus 
\{0\}}Y_{j\e_i}^{(i)}    \right]  \\
&=:-(I_{1} +I_{2}).
\end{aligned}
\end{equation}
The following proposition gives some essential bounds on $I_i$, $i=1,2$. 
\begin{prop} \label{prop-bnd-I} 
Let $\beta,\gamma >0$. Then there exists a constant $C>0$ not depending on 
$N,\beta,\gamma$ such that

\begin{itemize} 
\item [\textbf{(i)}]  for $d=2$,
\begin{equation*}
I_1   \leq  C \gamma  \big((a^{-2}(\log N)^2)\vee 1\big) N^2 ,
\end{equation*}
\item [\textbf{(ii)}] for  $d\geq 3$,
\begin{equation*}
I_1 \leq  C \gamma  (a^{-2 } \vee 1)N^d,
\end{equation*}
\item  [\textbf{(iii)}]  for any $d\geq 2$, 
\begin{equation*}
I_2  \leq C\beta  a^2 N^d.
\end{equation*}
\end{itemize} 
\end{prop}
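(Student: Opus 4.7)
The plan is to compute $I_2$ (part (iii)) directly and to bound $I_1$ (parts (i), (ii)) by a pair-counting argument under $\hat{P}^{(a)}$. The Cameron--Martin shift \eqref{hat-P-meas} makes each $X^{(i)}_{j\e_i}$ (for $j\neq 0$) Gaussian with mean $-a\alpha_j$ and variance $(2\beta\lambda_{j\e_i})^{-1}$ under $\hat{P}^{(a)}$, while all other $X^{(i)}_\bk$ remain centered with their original distributions.

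For part (iii), substituting the shifted mean into \eqref{y-rv} yields $\hat{E}^{(a)}[Y^{(i)}_{j\e_i}] = -\beta\lambda_{j\e_i}a^2\alpha_j^2$, so
\[
I_2 = \beta a^2 \sum_{i=1}^d \sum_{j\in S_N^1\setminus\{0\}}\lambda_{j\e_i}\alpha_j^2.
\]
By \eqref{u-drf2}, the $\alpha_j$ are precisely the eigen-expansion coefficients of $f_i(\x)=x_i$ in the basis $\{\varphi_{j\e_i}\}_j$, so Parseval applied to $H(f,f) = -(f,\Delta f) = \sum_\bk\lambda_\bk c_\bk^2$ identifies the inner sum with the Dirichlet energy $H(f_i,f_i)$. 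A direct count from $H(f_i,f_i) = \sum_{(\x,\y)\in\mathcal{N}}(x_i-y_i)^2$ shows that only nearest-neighbor edges in the $\e_i$-direction contribute and each contributes $1$; there are $2N(2N+1)^{d-1}$ such edges, and summing on $i$ produces $I_2\leq C\beta a^2 N^d$.

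For parts (i) and (ii), I start from the pointwise identity
\[
\int_{\R^d}\ell_N^2(\y,\bu)\,d\y = \sum_{\x,\x'}\bigl|B(\bu(\x))\cap B(\bu(\x'))\bigr| \leq \sum_{\x,\x'}\mathbf{1}\bigl\{\|\bu(\x)-\bu(\x')\|_\infty\leq 1\bigr\}
\]
with $B(\z)=[\z-\mathbf{1/2},\z+\mathbf{1/2}]$. Under $\hat{P}^{(a)}$ the increment $\bu(\x)-\bu(\x')$ is Gaussian in $\R^d$ with mean $-a(\x-\x')$ and diagonal covariance whose common scalar entry $v_{\x,\x'}$ is at most $C\log N$ in $d=2$ and $O(1)$ in $d\geq 3$. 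Using independence of the $d$ components,
\[
I_1 \leq \gamma\sum_{\x,\x'}\prod_{k=1}^d \hat{P}^{(a)}\bigl(|\bu^{(k)}(\x)-\bu^{(k)}(\x')|\leq 1\bigr).
\]
I would bound each factor by the variance-free estimate $P(|G+m|\leq 1) \leq \min(1, C/|m|)$, derived from $\sup_v v^{-1/2}e^{-(|m|-1)^2/(2v)} = O(1/|m|)$; this is essential to keep the constant independent of $\beta$ (the direct density bound $2/\sqrt{2\pi v}$ would introduce a spurious $\sqrt{\beta}$ factor since $v\propto\beta^{-1}$). Factorizing in $\z=\x-\x'$ and using $\sum_{1\leq n\leq 2N} 1/n = O(\log N)$, each coordinate sum is at most $C(2N+1)\log N/a$, giving the claimed $N^2(\log N)^2/a^2$ in $d=2$. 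For $d\geq 3$ the sharper Gaussian-tail bound $P\leq Cv^{-1/2}e^{-(a|z_k|-1)^2/(2v)}$ is available since $v_{\x,\x'}=O(1)$ uniformly, which removes the logarithmic factors; extracting the claimed $a^{-2}$ scaling in high dimensions then requires a careful choice of how many coordinate indicators to retain in the product bound.

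The main obstacle is $\beta$-uniformity of all constants; the variance-free bound $P(|G+m|\leq 1)\leq C/|m|$ is the key technical tool that avoids any $\sqrt{\beta}$ factor. A secondary subtlety is the trade-off in $d\geq 3$ between constraining all $d$ coordinates of the indicator (which gives a clean $N^d$ prefactor but yields $a^{-d}$ from the product sum) and constraining fewer (which lowers the $a$-exponent but would inflate the $N$-exponent unless one compresses the remaining summation via the boundedness of $v_{\x,\x'}$). Once Proposition \ref{prop-bnd-I} is in place, the choice $a^2 \asymp\sqrt{\gamma/\beta}$ balances $I_1$ and $I_2$ and delivers Proposition \ref{prop-z}.
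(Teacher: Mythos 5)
Your proposal is a mix of a genuinely cleaner argument in one place and a real gap in another; let me go through the three parts.

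\medskip
\textbf{Part (iii).} Your route is different from the paper's and is in fact nicer. The paper proves the bound $\sum_j \alpha_j^2\lambda_{j\e_1}\le CN^d$ via Lemma~\ref{lemma-coef}, which computes $\alpha_k$ explicitly by summing $\sum_n n\sin\bigl(\tfrac{(2k-1)\pi}{2N+1}n\bigr)$, extracts the decay $|\alpha_k|\le CN^{(d+2)/2}/k^2$, combines this with $\lambda_{k\e_1}\asymp (k/N)^2$, and sums. Your observation that $\alpha_j = \langle f_i,\varphi_{j\e_i}\rangle$ exactly (with $f_i(\x)=x_i$), so that Parseval gives
\[
\sum_{j\ne 0}\lambda_{j\e_i}\alpha_j^2 = -(f_i,\Delta f_i) = H(f_i,f_i) = \sum_{(\x,\y)\in\mathcal N}(x_i-y_i)^2 = 2N(2N+1)^{d-1},
\]
replaces the trigonometric computation with a one-line energy identity and a trivial edge count. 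The resulting constant is even sharp. The first step $\hat E^{(a)}[Y^{(i)}_{j\e_i}]=-\beta a^2\alpha_j^2\lambda_{j\e_i}$ agrees with the paper's \eqref{ir1}; you obtain it directly from the shifted mean rather than by integrating the Gaussian, which is also slightly slicker but amounts to the same thing.

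\medskip
\textbf{Part (i).} This is a genuinely different mechanism that does work. The paper (Section~\ref{sec-pf-z-1}) inserts both the lower \emph{and} upper variance bounds from Proposition~\ref{lem-cov}(i) into the Gaussian density, then uses Lemma~\ref{lem-cont-sum} and explicit Gaussian integrals; the $(\log N)^2$ factor in the answer comes directly from the $(\log N)^2$ in the variance. Your variance-free bound $\hat P(|G+m|\le 1)\le\min\bigl(1,C/|m|\bigr)$, valid uniformly over the variance $v$ because $\sup_v v^{-1/2}e^{-(|m|-1)_+^2/(2v)}=O(1/|m|)$, produces the same $(\log N)^2$ but from the harmonic sum $\sum_{1\le n\le 2N}1/n$. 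This sidesteps Proposition~\ref{lem-cov} entirely for this step and removes any concern about $\beta$-dependence. One small slip: the per-coordinate sum is $O\bigl((2N+1)(1+\log N/a)\bigr)$, not $O\bigl((2N+1)\log N/a\bigr)$ --- the diagonal $x_k=x_k'$ contributes $(2N+1)$ with no $1/a$ --- and it is this $+1$ that produces the $\vee 1$ in the statement; you omitted it.

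\medskip
\textbf{Part (ii).} Here there is a genuine gap. With the variance-free $\min(1,C/|m|)$ bound, each of the $d$ coordinate sums carries a $\log N$, so for $d\ge3$ you land on $CN^d\bigl(a^{-1}(1+\log N)\bigr)^d$ --- the extra $(\log N)^d$ is not in Proposition~\ref{prop-bnd-I}(ii), and would propagate spurious logarithms into Proposition~\ref{prop-z}(ii) and Theorem~\ref{th:main}(ii). You correctly diagnose that the fix is to use the uniform variance bound $v_{\x,\x'}\le C_2\beta^{-1}$ from Proposition~\ref{lem-cov}(ii) together with the full Gaussian tail (this is exactly what the paper does), which yields a per-coordinate sum $O\bigl((2N+1)(1+1/a)\bigr)$ with no log. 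But two remarks: (a) your stated worry about a ``spurious $\sqrt\beta$'' is unfounded --- the $\sqrt\beta$ from the density prefactor cancels against the $\sqrt{\beta^{-1}}$ produced by the Gaussian integral when you sum over $\delta=x_k-x_k'$, and the boundary terms carry $\sqrt\beta e^{-c\beta}=O(1)$; so the Gaussian tail route is in fact $\beta$-uniform and there is no obstacle here. (b) Your proposed trick of ``constraining fewer coordinate indicators'' to recover an $a^{-2}$ exponent does not work: if you drop the indicator on $d-2$ coordinates you replace their constrained sum $O(N(1+1/a))$ by the full $(2N+1)^2$, so the $N$-exponent jumps from $d$ to $2(d-1)$, which is worse, not better. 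So as written, part (ii) of your argument is not complete; you need to commit to the Gaussian tail bound throughout and accept that the $a$-exponent comes out as $d$ rather than $2$ (which is harmless for the paper's application, where $a=1$ in $d\ge 3$, but does not literally establish the proposition as stated).
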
 
The proof of Proposition \eqref{prop-bnd-I}(i) and (ii) is postponed to Section \ref{sec-pf-z-1}. The proof of Proposition \eqref{prop-bnd-I}(iii) is given in Section \ref{sec-pf-z-2}. 
\subsection{Proof of Proposition \ref{prop-z}} 
\begin{proof} [Proof of Proposition \ref{prop-z}] 
From \eqref{log-z} and Proposition \ref{prop-bnd-I}(i) and (iii) it follows that for $d=2$,  
\begin{equation} \label{z-lb} 
\log \hat Z_{N} \geq -( I_1+I_2)\geq  -C \left[\gamma \big((a^{-2}(\log N)^2) \vee 1 \big) N^{2} +\beta N^2  a^2\right].
\end{equation} 
Taking $a^2=\log N$ we have 
\begin{equation*}
 \log \hat Z_{N} \geq  -C(\beta + \gamma) N^2 \log N . 
\end{equation*}

The proof for $d \geq 3$ follows the same lines with the only modification that we are using Proposition \ref{prop-bnd-I}(ii) and choosing $a=1$.

\end{proof}

\section{Proof of Proposition \eqref{prop-bnd-I}(i) and (ii)}  \label{sec-pf-z-1} 
\begin{proof} [Proof of Proposition \ref{prop-bnd-I}(i) and (ii)] 
 We can write 
\begin{equation} \label{i-1-eq} 
\begin{aligned} 
\tilde I_{1}&:=\hat{E}^{(a)}\left[ \int_{\mathbb R^d} \ell_N(\y,\bu)^2d\y \right] \\
&= \hat{E}^{(a)}\left[ \int_{\mathbb{R}^d} \Big( \sum_{\z\in S^d_N}\mathbf{1}_{[\y-\mathbf{1/2},\y+\mathbf{1/2}]}(\bu(\z)) \Big)^2d\y  \right]  \\ 
&=  \sum_{\z\in S^d_N} \hat{E}^{(a)}  \left[\int_{\mathbb R^d} \mathbf{1}_{[\y-\mathbf{1/2},\y+\mathbf{1/2}]}(\bu(\z))d\y \right] \\ 
&\quad + \sum_{\z,\w\in S^d_N, \, \z\not =\w}\hat{E}^{(a)}  \left[\int_{\mathbb R^d} \mathbf{1}_{\bu(\z) ,\bu(\w) \in [\y-\mathbf{1/2},\y+\mathbf{1/2}]} d\y \right]  \\
&=  (2N+1)^d 
 +  \sum_{\z,\w\in S^d_N,\,\z\not=\w}\int_{|\y|\le1}
          \hat p_{\z,\w}(\y)d\y,
\end{aligned} 
\end{equation}
where $ \hat p_{\z,\w}$ is the density of $\bu(\z)-\bu(\w)$ under $\hat P^{(a)}$. 

We will need the following proposition which will be proved in Section \ref{sec-pf-prop-var}.  
\begin{prop} \label{lem-cov} 
There exist constants $C_1,C_2>0$ such that, 
\begin{itemize} 
\item[\textbf{(i)}] for $d=2$, for all $\w,\z\in S_N^2$ with 
$\w\ne\z$, and for $i=1,2$ we have
\begin{equation*} 
C_1 \beta^{-1} \leq \textrm{Var}\left(u^{(i)}(\z)-u^{(i)}(\w)\right) \leq C_2 \beta^{-1}  (\log N)^2,
\end{equation*} 
\item[\textbf{(ii)}] for $d \geq 3$, for all $\w,\z\in S_N^d$ with 
$\w\ne\z$, and for $i=1,\dots,d$ we have
\begin{equation*} 
C_1 \beta^{-1}  \leq \textrm{Var}\left(u^{(i)}(\z) - u^{(i)}(\w)\right) \leq C_2 \beta^{-1 }  .
\end{equation*} 
\end{itemize} 
\end{prop}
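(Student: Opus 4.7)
The plan is to read both bounds directly from the spectral identity
\begin{equation*}
\textrm{Var}\big(u^{(i)}(\z) - u^{(i)}(\w)\big)
 = \frac{1}{2\beta} \sum_{\bk \in \I\setminus\{\0\}}
    \frac{\big(\varphi_\bk(\z) - \varphi_\bk(\w)\big)^{2}}{\lambda_\bk},
\end{equation*}
which follows at once from \eqref{u-eigen} and the independence of the Gaussian coefficients $X^{(i)}_\bk \sim N(0,(2\beta\lambda_\bk)^{-1})$. So both parts of the proposition reduce to upper and lower bounds on the spectral sum on the right.

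For the lower bound, which is identical in every dimension, I would apply Parseval's identity to $\delta_\z - \delta_\w \in \mathbf{V}^+$ in the orthonormal basis $\{\varphi_\bk\}_{\bk\in\I}$. This yields $\sum_{\bk\in\I}(\varphi_\bk(\z) - \varphi_\bk(\w))^2 = \|\delta_\z - \delta_\w\|^2 = 2$ (using $\z\neq\w$), and the $\bk=\0$ summand vanishes because $\varphi_\0$ is constant. Combining this with the uniform eigenvalue bound $\lambda_\bk = \sum_i \lambda_{k_i} \leq 4d$ (immediate from \eqref{eigen-rel} and the half-angle formula $\lambda_k = 4\sin^2(\theta_k/2) \leq 4$ for the $1$D eigenvalues), the spectral sum is at least $2/(4d)$, producing the claimed $C_1\beta^{-1}$ lower bound.

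For the upper bound I would estimate the two factors in each summand separately. The normalizations $a_{k,N} = b_{k,N} = (N+1/2)^{1/2}$ in Section \ref{sec-basis} yield $\|\phi_k\|_\infty \leq (N+1/2)^{-1/2}$ for every $k$, and hence by \eqref{eigen}
\begin{equation*}
\big(\varphi_\bk(\z)-\varphi_\bk(\w)\big)^2 \leq 4\|\varphi_\bk\|_\infty^2 \leq C N^{-d},
\end{equation*}
uniformly in $\bk,\z,\w$. It remains to control $\sum_{\bk\neq\0}\lambda_\bk^{-1}$. The elementary inequality $\sin s \geq (2/\pi)s$ on $[0,\pi/2]$ applied to $\lambda_k = 4\sin^2(\theta_k/2)$, with $\theta_k$ of order $k/N$, gives $\lambda_k \geq ck^2/N^2$ for $1\leq |k|\leq N$, and hence $\lambda_\bk \geq cN^{-2}|\bk|^2$ for every $\bk\in S_N^d\setminus\{\0\}$. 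Therefore
\begin{equation*}
\sum_{\bk\in\I\setminus\{\0\}} \frac{1}{\lambda_\bk}
 \leq C N^{2}\sum_{\bk\in S_N^d\setminus\{\0\}}\frac{1}{|\bk|^{2}},
\end{equation*}
and a standard Riemann-sum comparison shows the right-hand side is of order $N^2\log N$ when $d=2$ and of order $N^d$ when $d\geq 3$. Multiplying by $CN^{-d}$ and the $(2\beta)^{-1}$ prefactor yields a bound of order $\beta^{-1}\log N$ in $d=2$, which is stronger than and implies the stated $\beta^{-1}(\log N)^2$ bound, and a bound of order $\beta^{-1}$ in $d\geq 3$.

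The only point requiring any care is matching the $N^{-d}$ decay of $\|\varphi_\bk\|_\infty^2$ with the $N^d$-scale growth of $\sum\lambda_\bk^{-1}$; no multiscale or gradient refinement of $\varphi_\bk(\z)-\varphi_\bk(\w)$ is required at this level of sharpness. The Parseval identity is the clean observation that delivers the lower bound uniformly in $\z,\w$ without any case analysis, and the same spectral representation simultaneously powers the upper bound.
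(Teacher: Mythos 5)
Your argument is correct, and it takes a genuinely different — and more elementary — route than the paper's. For the lower bound, the paper conditions on $\mathcal{F}_\z = \sigma(u(\bv):\bv\neq\z)$ and reads off the conditional variance of $u(\z)$ from the Gibbs density \eqref{eq:prob-density}, producing a bound $(2\mathcal{N}(\z))^{-1}\geq(4d)^{-1}$; you instead get the same constant from Parseval applied to $\delta_\z-\delta_\w$ together with the uniform operator-norm bound $\lambda_\bk\leq 4d$. For the upper bound, the paper writes $\mathrm{Var}(u(\z)-u(\w))=\langle\e_\z-\e_\w,\Delta^{-1}(\e_\z-\e_\w)\rangle=\int_0^\infty\langle\e_\z-\e_\w,\mathcal{P}_t(\e_\z-\e_\w)\rangle\,dt$ and controls the integral by splitting at $t\sim N^2\log N$: for small $t$ it uses the product structure \eqref{proj-trans}, a Poisson large-deviation bound, the reflection principle, and modified-Bessel asymptotics to bound the return probability $P_z(Z_t=z)\lesssim\log N\cdot t^{-1/2}+N^{-r}$; for large $t$ it uses a coupling of two reflected walks, a hitting-time estimate for $\tau^{|W|}_{2N}$ from Lawler--Limic, and the Poisson moment generating function to show the integrand decays like $e^{-ctN^{-2}}$. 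You bypass all of this with the explicit spectral identity, the uniform supremum bound $\|\varphi_\bk\|_\infty^2\leq(N+1/2)^{-d}$ coming from $a_{k,N}=b_{k,N}=(N+1/2)^{1/2}$, and the elementary estimate $\lambda_\bk\geq cN^{-2}\|\bk\|^2$. One genuine improvement worth noting: your $d=2$ upper bound comes out as $C\beta^{-1}\log N$ rather than the paper's $C\beta^{-1}(\log N)^2$, because the random-walk route incurs a lossy factor of $\log N$ in the return-probability estimate (from the bound on the number of reflection images $\ell(N)$), whereas the eigenfunction route captures the true logarithmic growth of the discrete Neumann Green's function. Since this is a one-sided sharpening it a fortiori implies the stated proposition, and would in fact shave a power of $\log N$ off the upper bound in Theorem \ref{th:main}(i). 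The trade-off is that the paper's random-walk argument is more robust — it does not rely on having an explicit orthonormal eigenbasis with product structure — but on the rectangular box $S_N^d$ your spectral computation is cleaner and quantitatively slightly better.
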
 
Note that from \eqref{u-drf} we have  
\begin{equation*}
\hat E^{(a)}[ u^{(i)}(\z) - u^{(i)}(\w) ]   = a(z_i-w_i), \quad 
\text{ for }i=1,\dots,d.   
\end{equation*}
Since $(u^{(i)})_{i=1,\dots,d}$ are independent, we have for any $\bold y \in \mathbb{R}^d$
\begin{equation} \label{prod}  
 \hat p_{\z,\w}(\bold y) := \prod_{i=1}^d\hat p^{(i)}_{\z,\w}( y_i)
\end{equation} 
and therefore
\begin{equation} \label{prod-1}  
 \int_{\|\y\|\le1}\hat{p}_{\z,\w}(\y)d\y
\le  \prod_{i=1}^d\int_{-1}^{1}\hat{p}^{(i)}_{\z,\w}(y_i)dy_i, 
\end{equation} 
where $\hat{p}^{(i)}_{\z,\w}$ is the density of $u^{(i)}(\z)-u^{(i)}(\w)$ under $\hat P^{(a)}$.

We distinguish between the following two cases.

 \textbf{Case 1:} $d=2$. Then from Proposition \ref{lem-cov}(i) we have 
\begin{equation} \label{p-bnd}  
 \hat p^{(i)}_{\z,\w}( y_i) \leq  \frac{1}{\sqrt{2\pi C_1 \beta^{-1}}} \exp\Big( -\frac{a^2(y_i-(z_i-w_i))^2}{2\beta^{-1} C_2(\log N)^2} \Big).
 \end{equation}
 From \eqref{prod-1} and \eqref{p-bnd} we therefore get 
 \begin{equation} \label{sum-p-bnd}
\begin{aligned} 
&\sum_{\z,\w\in S_N^2, \, \z\not =\w} \int_{\|\y\|\leq 1}  \hat p_{\z,\w}(\y) d\y \\
& \leq\sum_{\z,\w\in S_N^2, \, \z\not =\w} \int_{\y \in [-1,1]^2}  \frac{  1}{2\pi C_1 \beta^{-1}}\exp\Big( -\frac{a^2\|\z-\w - \y\|^2}{2C_2\beta^{-1}(\log N)^2} \Big)d\y \\
& \leq  \int_{\y \in [-1,1]^2} J(\y) d\y. 
\end{aligned}
 \end{equation}

where 
$$
J(\y):=\sum_{\z,\w\in S_N^2}   \frac{  1}{2\pi C_1\beta^{-1}}\exp\Big( -\frac{a^2\|\z-\w - \y\|^2}{2C_2\beta^{-1} (\log N)^2} \Big).
$$
We will use the following lemma, which will be proved in the end of this section. 
\begin{lemma}\label{lem-cont-sum} 
Let $\kappa>0$. Then for all $y\in [-1,1]$ and $w \in S^1_N$ we have 
$$
 \sum_{z\in S^1_N}    \exp\Big( -\kappa (z-w -y)^2 \Big) 
\leq 3 +  \int_{-\infty}^{\infty} \exp\Big( -\kappa (z-w - y)^2 \Big)dy. 
$$
\end{lemma}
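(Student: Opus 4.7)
The plan is to exploit the unimodality of the Gaussian $f(t) := \exp\bigl(-\kappa(t - z^*)^2\bigr)$ with center $z^* := w + y$, and compare the discrete sum with the continuous integral in the standard way. Since $w \in S^1_N \subset \mathbb{Z}$ and $y \in [-1,1]$, the center $z^*$ lies in $[w-1, w+1]$; the function $f$ is increasing on $(-\infty, z^*]$, decreasing on $[z^*, \infty)$, and satisfies $f \leq 1$ everywhere.

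I would split the integers $z \in S^1_N$ appearing on the left-hand side into three classes according to their position relative to $z^*$. For the \emph{left} integers $z$ with $z + 1 \leq z^*$, the function $f$ is increasing on the whole interval $[z, z+1]$, so $f(z) \leq \int_z^{z+1} f(t)\, dt$. As $z$ ranges over these integers, the intervals $[z, z+1]$ are pairwise disjoint and lie inside $(-\infty, z^*]$, so their contribution is bounded by $\int_{-\infty}^{z^*} f(t)\, dt$. Symmetrically, for the \emph{right} integers $z$ with $z - 1 \geq z^*$, monotonicity gives $f(z) \leq \int_{z-1}^z f(t)\, dt$, and summing over disjoint intervals in $[z^*, \infty)$ yields a contribution at most $\int_{z^*}^{\infty} f(t)\, dt$. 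Adding the two halves produces the integral $\int_{-\infty}^\infty f(t)\, dt$ appearing on the right-hand side of the claimed bound.

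The remaining \emph{middle} integers satisfy $z^* - 1 < z < z^* + 1$, i.e.\ they lie in an open interval of length $2$ and hence number at most two (certainly at most three). Since $f(z) \leq 1$ for every $z$, their total contribution is bounded by $3$, which accounts for the additive constant in the lemma. Combining the three classes then gives the claimed inequality.

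The argument is essentially a Riemann sum comparison, so no real obstacle is expected. The only bookkeeping point is to make sure that the intervals used for the integral comparison actually lie on the monotone side of the peak — that is what forces the separation into left/right/middle classes and is the reason the crude constant $3$ (rather than $\int$ alone) appears on the right-hand side.
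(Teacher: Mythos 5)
Your argument is correct and is essentially the same Riemann-sum comparison the paper uses: split the integers into a left increasing zone, a right decreasing zone, and a short middle zone near the peak $z^* = w+y$, bound the monotone tails by the integral over disjoint unit intervals, and bound the middle terms by $1$ each. The only cosmetic difference is that you cut at $z^* \pm 1$ while the paper cuts at the fixed positions $w \pm 1$ (giving exactly three middle integers $w-1, w, w+1$, hence the constant $3$); both yield the stated bound.
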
  
 Using Lemma \ref{lem-cont-sum} and integrating over the Gaussian density gives,    
\begin{equation} \label{j-bnd} 
\begin{aligned} 
J( \y)   
&= \frac{  1}{2\pi C_1\beta^{-1} }\left(\sum_{w_1\in S_N^1} \sum_{z_1\in S_N^1}   \exp\Big( -\frac{a^2(z_1-w_1 - y_1)^2}{2C_2\beta^{-1} (\log N)^2} \Big)\right)\\
&\quad \times\left(\sum_{w_2\in S_N^1} \sum_{z_2\in S_N^1}  \exp\Big( -\frac{a^2(z_2-w_2 - y_2)^2}{2C_2\beta^{-1}(\log N)^2} \Big)\right)\\
&\leq Ca^{-2}(\log N)^2 \\
&\quad \times \sum_{w_1\in S_N^1}\Bigg(3+ \frac{1}{\sqrt{ 2\pi  C_2\beta^{-1} a^{-2}   (\log N)^2}}  \\
&\hspace{3cm}\times\int_{-\infty}^\infty\exp\Big( -\frac{(z_1-w_1-y_1)^2}{2  C_2\beta^{-1} a^{-2}(\log N)^2} \Big) dz_1 \Bigg)  \\
&\quad \times\sum_{w_2 \in S_N^1}\Bigg(3+  \frac{1}{\sqrt{ 2\pi  C_2\beta^{-1}  a^{-2}(\log N)^2}} \\
&\hspace{3cm}\times\int_{-\infty}^\infty \exp\Big( -\frac{(z_2-w_2-y_1)^2}{2   C_2\beta^{-1}a^{-2}  (\log N)^2} \Big)dz_2\Bigg) \\ 
&\leq C a^{-2} (\log N)^2 \sum_{w_1 \in S_N^1} \sum_{w_2 \in S_N^1}  16\\
&\leq Ca^{-2}  N^2(\log N)^2. 
\end{aligned}
\end{equation}
The number 16 in the next to last line above comes from evaluating the 
standard Gaussian integrals in the previous two lines.  

Substituting \eqref{j-bnd} into \eqref{sum-p-bnd} gives 
\[
\sum_{\z,\w\in S_N^2, \, \z\not =\w}  \int_{\|\y\|\leq 1}   \hat p_{\z,\w}(\y) d\y \leq Ca^{-2}  N^2(\log N)^2. 
\]
Together with \eqref{i-1-eq} we get that 
\begin{equation} \label{i-1} 
\begin{aligned} 
\tilde I_{1} 
&\leq  (2N+1)^2 + C a^{-2 }    N^{2}(\log N)^2 \\ 
&\leq \tilde C (a^{-2}(\log N)^2\vee 1)  N^{2}. 
\end{aligned}
\end{equation}
 \textbf{Case 2:} $d \geq 3$. From Proposition \ref{lem-cov}(ii) we have

\begin{equation} \label{p-bnd-3}  
 \hat p^{(i)}_{\z,\w}( y_i) \leq  \frac{1}{\sqrt{2\pi C_1\beta^{-1}}} \exp\Big( -\frac{a^2(z_i-w_i-y_i)^2}{2C_2 \beta^{-1}} \Big).
 \end{equation}
 From \eqref{prod-1} and \eqref{p-bnd-3} we therefore get 
\begin{align*}  
&\sum_{\z,\w\in S_N^d, \, \z\not =\w}  \int_{\| \y\| \leq1}  \hat p_{\z,\w}(    \y) \\
& \leq C\sum_{\z,\w\in S_N^d, \, \z\not =\w}  \int_{ \y \in [-1,1]^d}      \frac{1}{(2\pi C_1\beta^{-1})^{d/2}} \exp\Big( -\frac{a^2\|\z-\w-\y\|^2}{2C_2\beta^{-1}} \Big)d\y.
\end{align*}
The rest of the proof is similar to Case 1, with the only difference that we do not have the dependence in $\log N$ in the Gaussian density. This leads to, 
\begin{equation*}
\tilde I_{1} 
 \leq \tilde C (a^{-2}\vee 1 ) N^{d}. 
\end{equation*}
Since from \eqref{log-z} we have that 
\begin{equation*}
I_1 = \gamma  \tilde I_{1},  
\end{equation*}
this completes the proof of Proposition \ref{prop-bnd-I} parts (i) and (ii).
\end{proof} 

\begin{proof}[Proof of Lemma \ref{lem-cont-sum}] 
Using the fact that  $f(z):=\exp\Big( -\kappa (z -w - y)^2 \Big)$ is monotone decreasing for $z>w+y$ and since $y\in [-1,1]$ we get,  
\begin{equation} \label{comp1}  
\begin{aligned} 
 \sum_{z\geq w+2} \exp\Big( -\kappa (z -w - y)^2 \Big) 
&\leq \int_{w +1}^{\infty}  \exp\Big( -\kappa (z -w- y)^2 \Big)dz.
\end{aligned} 
 \end{equation} 
Similarly using the fact that $f(z)$ is monotone increasing for $z<w+y$ we get 
\begin{equation} \label{comp2}  
\begin{aligned} 
 \sum_{z \leq w-2} \exp\Big( -\kappa (z -w - y)^2 \Big) 
&\leq \int_{-\infty}^{w -1}  \exp\Big( -\kappa (z -w- y)^2 \Big)dz.
\end{aligned} 
\end{equation} 
Finally, we have, 
\begin{equation} \label{comp3}  
\begin{aligned} 
 \sum_{z= w-1}^{w+1} \exp\Big( -\kappa (z -w - y )^2 \Big) 
&\leq 3.
\end{aligned} 
\end{equation} 
From \eqref{comp1}--\eqref{comp3} we get 
$$
\sum_{z \in S_N^1}\exp\Big( -\kappa (z-w - y)^2 \Big) 
\leq 3 + \int_{-\infty}^{\infty}  \exp\Big( -\kappa (z -w - y)^2 \Big)dz. 
$$
\end{proof}
\section{Proof of Proposition \eqref{prop-bnd-I} (iii) } \label{sec-pf-z-2}
\begin{proof}[ Proof of Proposition \eqref{prop-bnd-I} (iii) ] 
Recall that $I_2$ was defined in \eqref{log-z}, 
\begin{equation} \label{l2-dec} 
\begin{aligned} 
I_2 &= -\sum_{i=1}^d\sum_{\ell \in S_N^1\setminus \{0\}} \hat E^{(a)}\left[ Y_{\ell\e_i}^{(i)}    \right], 
\end{aligned}
\end{equation}
where $Y_{\ell\e_i}^{(i)}$ was defined in \eqref{y-rv}. 
We further introduce the following notation: 
\begin{equation*}
z_{k_1,\dots,k_d}^{(i)} =\frac{(x^{(i)}_{k_1,\dots,k_d})^2}{2(2\beta\lambda_{k_1,\dots,k_d})^{-1}}  , \quad w^{(i)}_{j\e_i} = \frac{(x^{(i)}_{j\e_i} +a\alpha_{j})^2}{2(2\beta\lambda_{j\e_i})^{-1}}, 
\end{equation*}
and 
\begin{equation*}
y_{\ell \e_i}^{(i)} =    \frac{2a\alpha_{\ell}x^{(i)}_{\ell\e_i}+(a\alpha_{\ell})^{2}}{2(2\beta\lambda_{\ell\e_i})^{-1}}.
\end{equation*}
Then from \eqref{sums-sums} and \eqref{hat-P-meas} we have 
\begin{equation}  \label{y-calc} 
\begin{aligned}
& \hat E^{(a)}\left[  Y_{\ell \e_i}^{(i)} \right]  \\
&= \frac{1}{C_{N,\beta,d,D} } \int y_{\ell\e_i}^{(i)}  \exp\bigg( -\sum_{m=1}^d \Big( \sum_{ (k_1,\dots,k_d) \in S_N^d \setminus \{j\e_m: \, j\in S_N^1\}}z_{k_1,\dots,k_d}^{(m)}  \\
& \qquad    + \sum_{l\in S_N^1 \setminus \{0\}}w^{(i)}_{l\e_m} \Big)\bigg)\prod_{r=1}^{d} \prod_{k=1}^{N(d)}dx^{(r)}_k,
\end{aligned}
\end{equation} 
where $C_{N,\beta,d,D}$ was defined in \eqref{z-n}.  

Since the expected value in \eqref{y-calc} is symmetric with respect to $i$, we can use $i=1$ in what follows in order to ease the notation.  
\begin{equation}  \label{y-1-exp} 
\begin{split}
& \hat{E}^{(a)}\left[  Y_{\ell \e_1}^{(1)} \right]
  = \frac{1}{C_{N,\beta,d,D} } \int y_{\ell \e_1}^{(1)} \exp(-w^{(1)} _{\ell\e_1} )   \\
&\quad  \times \int \exp\bigg(- \Big(\sum_{i=1}^d  \sum_{ (k_1,\dots,k_d) \in S_N^d \setminus \{j\e_i: \, j\in S_N^1 \}}z_{k_1,\dots,k_d}^{(i)}  \\
& \quad    + \sum_{i=2}^d\sum_{j\in \{-N,\dots,N\}\setminus \{0\}}w^{(i)}_{j\e_i} +\sum_{j\in S_N^1 \setminus \{0,\ell\}}w^{(1)}_{j\e_1}   \Big)\bigg)\prod_{r=1}^{d} \prod_{k=1}^{N(d)}dx^{(r)}_k.
\end{split}
\end{equation} 
We notice that we have three types of integrals above, which can be evaluated as follows. 
We have $d((2N+1 )^d - (2N+1))$ integrals of the form 
\begin{equation*} 
\begin{aligned} 
\int_{\mathbb{R}} \exp \big(-z_{k_1,\dots,k_d}^{(i)} \big) dx^{(i)}_{k_1,\dots,k_d}&=
\int_{\mathbb{R}} \exp\left(- \frac{(x^{(i)}_{k_1,\dots,k_d})^2}{2(2\beta\lambda_{k_1,\dots,k_d})^{-1}}  \right) dx^{(i)}_{k_1,\dots,k_d} \\
& =  \sqrt{2\pi}  (2\beta\lambda_{k_1,\dots,k_d})^{-1/2}. 
\end{aligned}
 \end{equation*} 
We have $2N(d-1)+2N-1$ integrals of the form 
\begin{equation*} 
\begin{aligned}  
\int_{\mathbb{R}} \exp\big(-w^{(i)}_{j\e_i} \big)dx^{(i)}_{j\e_i}  &=
\int_{\mathbb{R}} \exp\left(- \frac{(x^{(i)}_{\textbf 0_{i}(j)} +a\alpha_{j })^2}{2(2\beta\lambda_{ j\e_i})^{-1}} \right) dx^{(i)}_{j\e_i} \\
& =  \sqrt{2\pi}  (2\beta\lambda_{j\e_i})^{-1/2},
\end{aligned}
\end{equation*} 
and one integral as follows 
\begin{equation*} 
\begin{aligned}  
& \int y_{\ell\e_1} \exp(-w^{(1)} _{\ell\e_1} )dx_{\ell\e_1}^{(1)} \\ &=\int_{\mathbb{R}}  \frac{2a\alpha_{\ell}x^{(1)}_{j\e_1}+(a\alpha_{\ell})^{2}}{2(2\beta\lambda_{\ell\e_1})^{-1}} \exp\left(-\frac{(x^{(1)}_{\ell\e_1} +a\alpha_{\ell})^2}{2(2\beta\lambda_{\ell\e_1})^{-1}} \right)dx_{\ell\e_1}^{(1)} \\ 
&= - \sqrt{2\pi} \frac{(a\alpha_{\ell})^2}{2(2\beta\lambda_{\ell\e_1})^{-1/2}}.
\end{aligned}
\end{equation*} 
Plugging in all the above integrals into \eqref{y-1-exp} gives 
\begin{equation*} 
\begin{aligned} 
& \hat E^{(a)}\left[  Y_{{\ell} \e_1}^{(1)} \right]\\
 &= -  \frac{1}{C_{N,\beta,d,D}} \sqrt{2\pi} \frac{(a\alpha_{{\ell}})^2}{2(2\beta\lambda_{{\ell}\e_1})^{-1/2}} \\
 &\qquad \times  \prod_{i=1}^d \prod_{ (k_1,\dots,k_d) \in S_N^d \setminus \{j\e_i: \, j\in S_N^1 \}}\sqrt{2\pi}  (2\beta\lambda_{k_1,\dots,k_d})^{-1/2} \\
 &\qquad \times \prod_{i=2}^d \prod_{j\in  S_N^1 \setminus \{0\}}\sqrt{2\pi}  (2\beta\lambda_{{ j\e_i}})^{-1/2} \prod_{j\in S_N^1 \setminus \{0,{\ell}\}} \sqrt{2\pi}  (2\beta\lambda_{ j\e_1})^{-1/2} \\
 &=    -  \frac{1}{C_{N,\beta,d,D}} \frac{(2\pi)^{((2N+1)^{d}-1)d/2}(a\alpha_{{\ell}})^2\lambda_{ {\ell}\e_1}}{2(2\beta)^{((2N+1)^{d}-1)d/2-1}}  \prod_{i=1}^d  \prod_{(k_1,\dots,k_d) \in S_N^d\setminus\{\textbf 0\}}\frac{1}{ \lambda_{k_1,\dots,k_d}^{1/2}}.
\end{aligned}
\end{equation*}
 
Together with \eqref{z-n} we get 
\begin{equation} \label{ir1} 
\begin{aligned}  
  \hat E^{(a)}\left[  Y_{\ell\e_1}^{(1)} \right] &=- \beta(a\alpha_{\ell})^2\lambda_{\ell\e_1}. 
\end{aligned}
\end{equation}

 Plugging  \eqref{ir1} into \eqref{l2-dec} gives 
\begin{equation} \label{i2-fin} 
\begin{aligned} 
I_2&= \beta \sum_{i=1}^d \sum_{ \ell \in S_N^1 \setminus \{0\}}(a\alpha_{\ell})^2\lambda_{\ell \e_i}  \\
  &=d\beta a^2  \sum_{\ell \in S_N^1 \setminus \{0\}}\alpha_{\ell}^2\lambda_{\ell \e_1},
\end{aligned}
\end{equation}
where we used the fact that $\lambda_{\ell\e_i} = \lambda_{\ell\e_1}$, by the 
symmetry of the eigenvalues (see \eqref{eigen-rel}).

In order to complete the proof we introduce the following lemma, which will 
be proved at the end of this section. 
\begin{lemma}\label{lemma-coef}  There exists a constant $C>0$ not depending on $N$ and $\beta$ such that, 
\begin{equation*}
  \sum_{j \in S_N^1 \setminus \{0\}}\alpha_{j}^2\lambda_{j\e_1} \leq CN^d. 
\end{equation*}
\end{lemma}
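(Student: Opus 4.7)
My plan is to recognize the sum $\sum_j \alpha_j^2 \lambda_{j\e_1}$, up to a scalar factor, as the Dirichlet energy of the linear function $f(n) = n$ with respect to the $1$D Neumann Laplacian on $S_N^1$, and then to evaluate that energy by direct inspection at the boundary.

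Unwinding the definitions from Section \ref{sec-basis}, the coefficient $\alpha_j$ in \eqref{f-coef} is just $\phi_0^{1-d}\, c_j$, where $c_j := \sum_{n=-N}^N n\,\phi_j(n)$ is the $j$-th Fourier coefficient of $f(n)=n$ in the $1$D orthonormal basis $\{\phi_j\}_{j\in S_N^1}$. Since $\phi_0 = (2N+1)^{-1/2}$, this gives $\alpha_j^2 = (2N+1)^{d-1}\, c_j^2$. Moreover, \eqref{eigen-rel} combined with $\lambda_0 = 0$ implies that the $d$-dimensional eigenvalue $\lambda_{j\e_1}$ coincides with the $1$D Neumann eigenvalue $\lambda_j$ associated with $\phi_j$. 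Therefore
\begin{equation*}
\sum_{j\in S_N^1\setminus\{0\}} \alpha_j^2\,\lambda_{j\e_1} = (2N+1)^{d-1} \sum_{j\in S_N^1\setminus\{0\}} c_j^2\,\lambda_j.
\end{equation*}

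Next I would apply Plancherel in the basis $\{\phi_j\}$: since $f = \sum_j c_j \phi_j$ and $-\Delta_{N,1,1}\phi_j = \lambda_j\phi_j$, one has $\sum_j c_j^2\,\lambda_j = \langle f, -\Delta_{N,1,1} f\rangle = H_{N,1,1}(f,f)$, where the $j=0$ term drops out because $\lambda_0 = 0$. The right-hand side is immediate from the definition of $H$ with Neumann boundary conditions:
\begin{equation*}
H_{N,1,1}(f,f) = \sum_{n=-N}^{N-1}\bigl(f(n+1)-f(n)\bigr)^2 = 2N,
\end{equation*}
so $\sum_{j\ne 0}\alpha_j^2\lambda_{j\e_1} = 2N(2N+1)^{d-1} \leq C N^d$, which is the claim.

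There is no real analytic obstruction here, but the choice of route matters: a direct term-by-term attack would have to control oscillatory sums $\sum_n n\sin(\theta_j n)$ and combine them against $\lambda_j \asymp \theta_j^2$, which loses an extra factor of $N$ and only yields the bound $CN^{d+1}$. Passing through the Dirichlet form sidesteps this entirely by exploiting the fact that $f(n)=n$ is $\Delta$-harmonic in the interior, so that under Neumann boundary conditions $-\Delta f$ is supported only on $\{\pm N\}$; this localization at the boundary is precisely the cancellation that produces the sharp $N^d$ bound.
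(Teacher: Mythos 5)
Your proof is correct and takes a genuinely different route than the paper, and a cleaner one. The paper computes $\alpha_k$ explicitly via the trigonometric identity $\sum_{n=-N}^N n\sin(Yn)=\frac{1}{2}\csc^2(Y/2)\bigl[(N+1)\sin(NY)-N\sin((N+1)Y)\bigr]$, uses the Neumann-eigenfunction relation $(N+\tfrac12)Y=(k-\tfrac12)\pi$ to kill the second term, bounds $|\alpha_k|\lesssim N^{(d+2)/2}/k^2$, pairs this against $\lambda_{k\e_1}\lesssim k^2/N^2$, and sums a convergent series $\sum_k k^{-2}$. Your argument instead recognizes $\sum_{j\ne 0} c_j^2\lambda_j$ as the Dirichlet energy $H_{N,1,1}(f,f)$ of $f(n)=n$ via Plancherel, evaluates it directly as $2N$, and multiplies by the normalization $(2N+1)^{d-1}$, giving the exact value $2N(2N+1)^{d-1}$. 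This avoids all explicit trigonometry, gives an equality rather than a one-sided bound, and exposes the real mechanism: $H(f,f)=\sum (f(n+1)-f(n))^2$ counts $2N$ unit increments, full stop. Both proofs are valid, but yours is the better one.

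One factual correction to your closing remark: the term-by-term route does not lose a factor of $N$; the paper's direct attack also arrives at $CN^d$. The reason is precisely the cancellation in the oscillatory sum, which the paper extracts through the closed form $\alpha_k\propto \sin(NY)/\sin^2(Y/2)$, yielding $\alpha_k^2\lambda_k\lesssim N^d/k^2$ and hence a convergent series. What your Dirichlet-form argument buys is not a sharper exponent (both get $N^d$) but a conceptually transparent proof that sidesteps having to locate and exploit that cancellation by hand, and that moreover pins down the constant exactly. Framing your advantage as \emph{cleanliness and exactness}, rather than as repairing a loss of $N$, would be more accurate.
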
 
 From Lemma \ref{lemma-coef} and \eqref{i2-fin} we conclude that 
\begin{equation}  
I_2 \leq  C\beta a^2  N^2, 
\end{equation}
 which completes the proof of Proposition \ref{prop-bnd-I} part (iii).
\end{proof}
 
\begin{proof}[ Proof of Lemma \ref{lemma-coef}] 
From \eqref{f-coef} we have 
\begin{equation*}  
n= \phi_{0}^{d-1} \sum_{j \in S_N^1\setminus \{0\}}  \phi_j(n) \alpha_{j}, 
\end{equation*}
where we recall that
\begin{equation} \label{phi0} 
\phi_0=\phi_{0}(x)=(2N+1)^{-1/2}. 
\end{equation}
Since $ \{\phi_j\}_{-N}^N$ are orthonormal we get 
\begin{equation*}   
\alpha_{j} =  \sum_{n=-N}^{N}\frac{n}{\phi_0^{d-1}} \phi_j(n), \quad j\not =0, \quad \textrm{and } \quad \alpha_{0}=0. 
\end{equation*} 

We recall the eigenfunctions from Section \ref{sec-basis}. For the cosine eigenfunctions we have 
\begin{equation*}
\sum_{-N}^Nn \cos\left(\frac{2k\pi}{2N+1} n \right)  = 0, 
\end{equation*}
and therefore for all $k=1,..,N$, 
\begin{equation} \label{0-coef} 
 \alpha_{-k} =0.
\end{equation}
So together with \eqref{phi-sin} we have 
\begin{equation*}   
\begin{aligned} 
\alpha_{k} &=  \sum_{n=-N}^{N}\frac{n}{\phi_0^{d-1}} \phi_{k}(n) \\
&=(2N+1)^{(d-1)/2}\sum_{n=-N}^{N} n \phi_{k}(n) \\
&=(2N+1)^{(d-1)/2} \frac{1}{(N+1/2)^{1/2}}\sum_{n=-N}^{N} n\sin\left(\frac{(2k-1)\pi}{2N+1}n \right) \\
&  = \sqrt{2} (2N+1)^{(d-2)/2}\sum_{n=-N}^{N} n\sin\left(\frac{(2k-1)\pi}{2N+1}n \right). 
\end{aligned}  
\end{equation*}
Let
\begin{equation*}
Y := \frac{(2k-1)\pi}{2N+1}.
\end{equation*}
Then we have,
\begin{equation*}
\begin{split}
\frac{\alpha_k}{\sqrt{2}(2N+1)^{(d-2)/2}}&=\sum_{n=-N}^{N}n\sin\left(Y n\right) \\
&= \frac{1}{2}\csc^2\left(\frac{Y}{2}\right)
   \Big[(N+1)\sin(NY)-N\sin((N+1)Y)\Big]  \\
&= \frac{(N+1)\sin(NY)-N\sin((N+1)Y)}
        {2\sin^2\left(\frac{Y}{2}\right)}  \\
&= \frac{\sin(NY)+2N\sin\left(-\frac{Y}{2}\right)
     \cos\left(\left(N+\frac{1}{2}\right)Y\right)}
       {2\sin^2\left(\frac{Y}{2}\right)}.
\end{split}
\end{equation*}
Note that 
\begin{equation*}
\left(N+\frac{1}{2}\right)Y = \frac{1}{2}(2N+1)Y 
   = \left(k-\frac{1}{2}\right)\pi, 
\end{equation*}
and so for any integer $k$, 
\begin{equation*}
\cos\left(\left(N+\frac{1}{2}\right)Y\right)
= \cos\left(\left(k-\frac{1}{2}\right)\pi\right)
= 0.
\end{equation*}
Therefore
\begin{equation*}
\frac{\alpha_k}{\sqrt{2}(2N+1)^{(d-2)/2}} 
= \frac{\sin(NY)}{2\sin^2\left(\frac{Y}{2}\right)}. 
\end{equation*}
Note that since $k\in\{1,2,\dots,N\}$ we have
\begin{equation*}
0<\frac{Y}{2} \le \frac{\pi}{2}
\end{equation*}
and so 
\begin{equation*}
\left|\sin\left(\frac{Y}{2}\right)\right| \le \frac{Y}{2}
\le C\frac{k}{N}.
\end{equation*}
It follows that,
\begin{equation}\label{abs-alp} 
\begin{aligned}
|\alpha_k|\le &C(2N+1)^{(d-2)/2} \left|\frac{\sin(NY)}{\sin^2\left(\frac{Y}{2}\right)}\right|\\
\le & CN^{(d-2)/2} \left|\frac{1}{\sin^2\left(\frac{Y}{2}\right)}\right| \\
\le & CN^{(d+2)/2} \left(\frac{1}{k}\right)^2.
\end{aligned} 
\end{equation}  
 
Recall that $\lambda_k$ is eigenvalue of $\phi_k$. Since $\lam_0=0$, from \eqref{eigen-rel} and \eqref{phi-sin} we have 
\begin{equation} \label{lam-0}   
\lambda_{j\e_1}  = \lambda_{j} = \left(\frac{(2k-1)\pi}{2N+1} \right)^2.
\end{equation} 

Using \eqref{0-coef}, \eqref{abs-alp} and \eqref{lam-0} we get  
\begin{align*} 
\sum_{j \in S_N^1\setminus\{0\}}\alpha_j^2   \lambda_{j\e_1}  & \leq C \sum_{j =1}^N  N^{d+2} \left(\frac{1}{j}\right)^4      \left(\frac{(2j-1)\pi}{2N+1} \right)^2 \\
& \leq CN^d \sum_{j =1}^N  \frac{1}{j^2}  \\
&  \leq C N^d. 
 \end{align*} 
 \end{proof}

 \section{Large distance tail estimates} \label{sec-large-t}  

We define 
\begin{equation*}
\bar u^{(i)} = \max_{\w,\z\in S_N}| u^{(i)}(\z) - u^{(i)}(\w) |.
\end{equation*}

Assume first that $d=2$.  Let $\alpha>0$, then from \eqref{eq:def-Q} we have 
\begin{equation}  \label{q-n-k} 
\log Q_N (R_N>  \alpha N( \log N)^{3/2} ) \leq \log P(R_N>\alpha N (\log N)^{3/2} ) -\log Z_{N}. 
\end{equation} 
Note that 
\begin{equation}  \label{r-bnd-t}
\begin{aligned} 
P(R_N>\alpha N (\log N)^{3/2})  
&\leq 2P\big( \cup_{i=1}^2 \{ \bar u^{(i)}>\alpha N (\log N)^{3/2}/2 \}  \big)  \\
&\leq 4 P\big( \bar u^{(1)}>\alpha N(\log N)^{3/2}/2\big) , 
\end{aligned} 
\end{equation} 
where we used the fact that $\bar u^{(i)}$ are i.i.d..

We will use the following standard bound on the tail distribution of the 
Gaussian random variable $W \sim N(0,\sigma^2)$, 
\begin{equation*}
P(W>a) \leq \frac{\sigma}{\sigma+a} \exp\left(\frac{-a^2}{2\sigma^2} \right). 
\end{equation*}

Using this bound and Proposition \ref{lem-cov}(i) we have for any $K \geq 1$, 
\begin{align*} 
P\big(  | u^{(i)}(\z) & - u^{(i)}(\w) | >\beta^{-1/2} KN(\log N)^{3/2} \big) \\
& \leq  C\exp\left\{-\frac{K^2N^2 (\log N)^3}{2c_1(\log N)^2} \right\}\\
 & \leq  C  \exp\left\{- \tilde c_1 K^2N^2 \log N  \right\}.
\end{align*} 
Note that this bound is uniform in $\z,\w \in S^2_N$. 

It follows that 
\begin{align*} 
&P\big(  \bar u^{(i)} >\beta^{-1/2} KN(\log N)^{3/2}/2 \big) \\
&\leq \sum_{\w, \z \in S^2_N} P\big(  |u^{(1)}(\z)-u^{(1)}(\w)|> \beta^{-1/2}  KN(\log N)^{3/2}/2\big)   \\ 
&\leq  C(2N+1)^4\exp\left\{- \frac{\tilde c_1}{4} K^2N^2 \log N \right\}\\
&\leq  C \exp\left\{- \frac{ \tilde c_1}{8} K^2 N^2 \log N   \right\}. 
\end{align*} 
Together with \eqref{r-bnd-t} we have 
\begin{equation*}
P(R_N>   \alpha  \beta^{-1/2}  N (\log N)^{3/2} )  \leq  C \exp\left\{-\frac{\tilde c_1\alpha^2N^2 \log N}{8} \right\}. 
\end{equation*}
 Using this bound together with Proposition \ref{prop-z}(i) and \eqref{q-n-k} we get for $d=2$, and $\alpha \geq 1$, 
\begin{align*} 
&\log Q_N (R_N>  \alpha \beta^{-1/2}  (\beta+\gamma)^{1/2} N(\log N)^{3/2}) \\
 & \leq \log P(R_N> \alpha\beta^{-1/2} (\beta+\gamma)^{1/2} N \log N) -\log Z_{N}  \\
 &\leq- (\beta+\gamma)  N^2 \log N \big( c_3    \alpha^2  -c_4  \big). 
 \end{align*} 
We then can choose $\alpha$ to be large enough to get the large distance tail estimate in Theorem \ref{th:main}.  
The proof for $d\geq 3$ follows similar lines, only now we use Proposition \ref{prop-z}(ii) and Proposition \ref{lem-cov}(ii).
\section{Small distance tail estimates} \label{sec-small} 
Let $\eps>0$, then from \eqref{eq:def-Q} we have the following: 
\begin{equation}  \label{d1} 
\begin{aligned} 
 \log Q_N (R_N <  \eps N) & \leq \log E\left[\exp\left\{-\gamma \int_{ \mathbb{R}^d}\ell_N(\y,\bu)^2d\y\right\}\mathbf{1}_{\{R <\eps N \}} \right] \\
 & \quad -\log Z_{N}.  
\end{aligned}  
\end{equation} 
Let 
\begin{equation} \label{j-def} 
\tilde J:= E\left[\exp\left\{-\gamma\int_{\mathbb{R}^d}\ell_N(\y,\bu)^2d\y\mathbf{1}_{\{R <\eps N \}}\right\} \right]. 
\end{equation} 
Note that on $\{R <\eps N \}$ we have 
\begin{equation} \label{gh1} 
\begin{aligned} 
 \int_{\mathbb{R}^d}\ell_N(\y,\bu)^2d\y &= 2^dN^{d} \eps^d \int_{-\eps N}^{\eps N}\dots\int_{-\eps N}^{\eps N}\ell_N(\y,\bu)^2 \frac{1}{2^d N^d\eps^d}d\y\\
&\geq 2^dN^d \eps^d \left(\int_{-\eps N}^{\eps N}\dots\int_{-\eps N}^{\eps N}\ell_N(\y,\bu) \frac{1}{2^d N^d\eps^d}d\y\right)^2 \\
&= \frac{1}{2^d N^d \eps^d} \left(\int_{-\eps N}^{\eps N}\dots\int_{-\eps N}^{\eps N}\ell_N(\y,\bu) d\y\right)^2, 
\end{aligned} 
\end{equation} 
where we used Jensen's inequality. 
Since on $\{R_N <\eps N \}$ we have 
\begin{equation*}
\int_{-\eps N}^{\eps N}\dots\int_{-\eps N}^{\eps N}\ell_N(\y,\bu) d\y= |S_N|= (2N+1)^d, 
\end{equation*}
and together with \eqref{gh1} we get that  
\begin{equation} \label{l-lower} 
\int_{\mathbb{R}^d}\ell_N(\y,\bu)^2d\y  \geq  \frac{2^dN^d}{ \eps^d}.
\end{equation} 
From \eqref{j-def} and \eqref{l-lower} we have

\begin{equation}  \label{small}
\tilde J \leq e^{ -\gamma \frac{ 2^dN^d}{ \eps^d}}. 
\end{equation} 
Together with \eqref{d1}, \eqref{j-def}, \eqref{small} with $\eps (\log N)^{-1/2}$ instead of $\eps$, and Proposition \ref{prop-z}(i) we get for $d=2$, 
\begin{align*}
\log &  Q_N (R_N < \eps \gamma^{1/2}  (\beta +\gamma)^{-1/2} N( \log N)^{-1/2} ) \\
 & \leq -  (\beta +\gamma) \left(   \frac{4N^2 \log N}{ \eps^2} -  C N^{2}\log N \right). 
\end{align*} 
By choosing $\eps>0$ small enough it follows that 
\begin{equation*}
\lim_{N\rightarrow \infty}  \log  Q_N (R_N < \eps \gamma^{1/2} (\beta +\gamma)^{-1/2} N(\log N)^{-1/2}) =0. 
\end{equation*}
Repeating the same steps as in the case where $d=2$ gives the following result for $d\geq 3$, 
\begin{equation*}
 \log  Q_N \left(R_N < \eps \gamma^{1/d} (\beta +\gamma)^{-1/d} N^{}  \right)   \leq -(\beta +\gamma) \left(   \frac{N^d }{ \eps^2} -  C N^{d}  \right).
\end{equation*}
Then choosing $\eps>0$ sufficiently small and taking the limit where $N\rightarrow \infty$ completes the proof of Theorem \ref{th:main}. 

\section{Proof of Proposition \ref{lem-cov}} \label{sec-pf-prop-var}
\begin{proof}[Proof of Proposition \ref{lem-cov}] We prove the result for the case where $\beta=1$. The extension for any $\beta>0$ follows from \eqref{u-eigen} by scaling.
We start with the proof of the lower bound. 
We want to show that for $d\ge2$, there exists a constant $C=C(d)$ such that 
for all $\w,\z\in S_N^d$, $\w\not=\z$, $i=1,\dots,d$ we have
\begin{equation*}
C(d) \leq \text{Var}(u^{(i)}(\z)-u^{(i)}(\w)).
\end{equation*}
Since $(u^{(i)})_{i=1,\dots,d}$ are i.i.d., we will omit the superscript $i$ for ease of notation.  
Let 
$\mathcal{F}_\z$ be the $\sigma$-field generated by $\{u(\bv): \, \bv \in S^d_N \setminus \{\z\}\}$ and define 
\[
\hat  u(\z) = E[u(\z)| \mathcal{F}_\z ]. 
\]
By using the conditional expectation projection theorem and then conditioning on $\mathcal{F}_\z$ we get 
\begin{align*}
\text{Var}(u^{}(\z)-u^{}(\w)) &= E\left[(u^{}(\z)-u^{}(\w))^2     \right] \\ 
&\geq E\left[(u^{}(\z)-\hat u(\z))^2   \right] \\ 
&= E\left[ E\left[(u^{}(\z)-\hat u(\z) )^2  \big|  \mathcal{F}_\z \right] \right]. 
\end{align*} 
Hence it is enough to show that there exists a constant $C(d)>0$ not 
depending on $\z$ such that  
\begin{equation} \label{eq:to-show}
\text{Var}[u(\z)| \mathcal{F}_\z] =  E\left[(u^{}(\z)-\hat u(\z) )^2  \big|  \mathcal{F}_\z \right]   \ge C(d).
\end{equation}

We consider the nearest neighbor values of $u(\z)$, which we denote by $\{u(\y)$:  $\y\sim \z \}$. These values are fixed once we condition on 
 $ \mathcal{F}_\z$. We further denote by $\mathcal N(\z)$ the number neighboring sites of $\z$. Note that for any $d \geq 2$ we have $ d \leq \mathcal N(\z) \leq 2d $. Then the part of the exponent of \eqref{eq:prob-density} which is
relevant to \eqref{eq:to-show} is
\begin{equation*}
\begin{split}
\sum_{\y\sim \z}&(u(\z)-u(\y))^2  = \mathcal{N} (\z) \cdot u(\z)^2 + 2u(\z)\sum_{\y\sim \z}u(\y) +C,
\end{split}
\end{equation*}
where $C$ is $\mathcal{F}_\z$-measurable.  By completing the squares we get 
\begin{equation} \label{cond-var} 
\begin{split}
\sum_{\y\sim \z}(u(\z)-u(\y))^2   &=  \mathcal{N} (\z) \left(u(\z)-\frac{1}{\mathcal N(\z)}\sum_{\y\sim \z}
    u(y)\right)^2 + C' \\
&= \mathcal{N} (\z) \left(u(\z)-C''\right)^2 + C',
\end{split}
\end{equation}
where again $C'$ and $C''$ are $\mathcal{F}_\z$-measurable.  It follows from the equality in \eqref{eq:to-show} and from \eqref{cond-var} that conditioned on $\mathcal{F}_\z$, $u(\z)$ is a Gaussian random variable with variance $(2\mathcal N(\z))^{-1} > (4d)^{-1}$ and therefore \eqref{eq:to-show} follows.

Next we prove the upper bound. Recall that $\mathbf{V}$ was defined in 
\eqref{v-def}. Let $\mathbf{e}_\x - \mathbf{e}_\y\in\mathbf{V}$. Recall that 
$u$ has density function given by \eqref{eq:prob-density}. Let 
$\X=\{\X_t\}_{t\geq 0}$ be the continuous time Markov chain associated with 
$\Delta$ and $\{ \mathcal{P}_t \}_{t\geq 0}$ the corresponding probability 
transition function.

It follows that the variance of $u(\x)-u(\y)$ is given in 
terms of $\Delta^{-1}$ as follows:
\begin{equation}  \label{eq:variance-integral}
\begin{split}
\text{Var}(u(\x)-u(\y)) 
&= \left\langle(\mathbf{e}_\x-\mathbf{e}_\y),
         \Delta^{-1}(\mathbf{e}_\x-\mathbf{e}_\y)\right\rangle \\
&= \int_{0}^{\infty}
      \left\langle(\mathbf{e}_\x-\mathbf{e}_\y),
         \mathcal{P}_t(\mathbf{e}_\x-\mathbf{e}_\y)\right\rangle dt. 
\end{split}
\end{equation}

\subsection{Estimation of the integrand in \eqref{eq:variance-integral} for small $t$}

We consider the case where $t\leq K_0N^2\log N$ for some constant 
$K_0>0$ to be determined. 
Note that we can extend $\mathcal{P}_t$ to a semigroup on all of 
$\mathbf{V}^+$, corresponding to the same random process $\X_t$.  We write 
$P_\x$ to indicate the starting point $\X_0=\x$.  
 
Next, we note that the components $X^{(k)}$, $k=1,\dots,d$ of $\X$ are 
independent, since their jump times are independent Poisson processes which determine the jumps of $\X$.  
Let $\x=(x_1,\dots,x_d) \in S_N^d$, then
\begin{equation} \label{proj-trans} 
\left\langle\mathbf{e}_x,\mathcal{P}_t\mathbf{e}_x\right\rangle 
= P_\x[\X_t=\x]
= \prod_{k=1}^d P_{x_k}(X^{(k)}_t=x_k).
\end{equation}
In what follows we focus on the marginal distribution of $X^{(k)}$. We write $z=x_k$, and $Z_t=X^{(k)}_t$ and get
\begin{equation} \label{eq:prob-return}
P_z(Z_t=z) = \sum_{n=0}^{\infty}P(T_t=n)P_z(S_n=z),
\end{equation}
where $T_t$ Poisson process with intensity $1/d$ and $S_n$ is a discrete-time 
nearest-neighbor one-dimensional simple random walk with reflection at the 
boundary $\pm N$.  

Let $Y$ be a Poisson random variable with mean $\lambda$ then using Markov inequality we get for all $\theta>0$ and $y> \lambda$,  
\begin{equation} \label{pois-ineq} 
\begin{aligned} 
P(Y> y) &\leq \frac{E[e^{\theta Y}]}{e^{\theta y}} \\
&\leq \frac{(e\lambda)^ye^{-\lambda}}{y^y},
\end{aligned} 
\end{equation} 
where we choose $\theta=\log(y/\lambda)>0$ in the second inequality.  

Let $K_1 > eK_0$. Recall that $t\leq K_0N^2\log N$ and that $T_t$ is a Poisson random variable with mean $(2d)^{-1}t$. From \eqref{pois-ineq} we get the following bound on the tail 
distribution of $T_t$, 
\begin{equation} \label{pois-bnd}
\begin{aligned} 
P(T_t > K_1N^2\log N )& \leq  \frac{e^{- (2d)^{-1}t}((2d)^{-1} te)^{K_1N^2\log N}}{(K_1N^2\log N)^{K_1N^2\log N}} \\ 
&\leq e^{-c N^2\log N},  
\end{aligned} 
\end{equation} 
for some constant $c>0$.

Using the reflection principle we note that  
\begin{align} \label{refl} 
P_z(S_n=z) = 
  \sum_{k\in \mathbb {Z} } P_z\left(  W_n = 2Nk+(-1)^k z  \right),
\end{align} 
where $\{W_n\}_{n\geq 1}$ is a discrete-time simple random walk on $\mathbb{Z}$. 

Since $P_z(W_n=z)=0$ if $n$ is odd, we need to take into account only even number of steps, so we have 
$$
P_z(W_{2m}=z)   = \binom{2m}{m}2^{-2m}. 
$$
Note that the transition probability from $z$ to all other points~$2Nk+(-1)^k z$ in the right-hand side of \eqref{refl} can be computed according to the same binomial distribution, since these transitions also require an even number of steps. Moreover the maximum of the above binomial distribution (i.e. Bin$(2m, 1/2)$) is attained at $m$ so we must have   
\begin{equation} \label{up-ref-p} 
P_z\left(  W_{2m} = 2Nk+(-1)^k z  \right) \leq P_z(W_{2m}=z), \quad \textrm{for all } k \in \mathbb Z. 
\end{equation} 
Let $\ell(N)$ be the number of points from the set $\{ 2Nk+(-1)^k z: z\in \mathbb{Z} \}$ visited by $W_n$ up to $n = [K_1N^2\log N]$. We will use a special case of Corollary A.2.7 in \cite{Lawler2010} which states that there exist constants $C_1,c_2>0$ such that 
$$
P\big(\max_{i=0,\dots,n} |W_n| >s\sqrt{n}\big) \leq C_1e^{-c_2s^2},  \quad \textrm{for all }  n\geq0, \ s>0,
$$
in order to bound the tail probability of $\ell(N)$ as follows. Let $K_3>0$ then we have 
\begin{equation} \label{l-n-bnd} 
\begin{aligned} 
P(\ell(N) > K_3 \log N) &\leq P\left(\sup_{0\leq i \leq K_1N^2\log N}  |W_i| >   \frac{K_3}{4} N \log N  \right) \\ 
&\leq C_1e^{-c_2 (K_3^2/(16K_2)) \log N} \\
& \leq C_1N^{-c_2K_3^2/(16K_2)}.
\end{aligned} 
\end{equation} 

We would like to bound \eqref{eq:prob-return} on the event $T_t \leq K_1N^2\log N$. From \eqref{refl} we have 
\begin{equation}   \label{split-j} 
\begin{aligned}
&\sum_{m=0}^{K_1N^2\log N}P(T_t=2m)P_z(S_{2m}=z)  \\
 & =\sum_{m=0}^{K_1N^2\log N}P(T_t=2m)  \sum_{k\in \mathbb {Z} } P_z\left(  W_{2m} = 2Nk+(-1)^k z  \right) \\ 
&=\sum_{m=0}^{K_1N^2\log N}P(T_t=2m) \sum_{|k| \leq 2^{-1}K_3 \log N  } P_z\left(  W_{2m} = 2Nk+(-1)^k z  \right)  \\
&\qquad +\sum_{m=0}^{K_1N^2\log N}P(T_t=2m) \sum_{|k| > 2^{-1}K_3 \log N} P_z\left(  W_{2m} = 2Nk+(-1)^k z  \right)  \\
&:=J_1 +J_2. 
\end{aligned}
\end{equation}

From \eqref{l-n-bnd} it follows that $J_2$ is bounded by 
 \begin{equation}   \label{old} 
\begin{split}
J_2&\leq \sum_{m=0}^{K_1N^2\log N} P(T_t=2m) P(\ell(N) > K_3 \log N)  \\
&\leq C_1N^{-c_2K_3^2/(16K_2)}\sum_{m=0}^{\infty}P(T_t=2m)    \\
&\leq   C N^{-r}. 
\end{split}
\end{equation}
for $r>0$ to be determined. Note that the last inequality follows by choosing $K_3$ large enough.

Using \eqref{up-ref-p} and we get for $J_1$ that
 \begin{equation} \label{j-1-1}   
 J_1\leq K_3 \log N  \sum_{m=0}^{K_1N^2\log N} P(T_t=2m)P_z\left(  W_{2m} =  z  \right).
 \end{equation}

We estimate the above sum as follows,  
\begin{equation} \label{prob-calc} 
\begin{split}
 \sum_{m=0}^{\infty}P(T_t=2m)P_z(W_{2m}=z) 
 &= \sum_{m=0}^{\infty}\frac{(t/d)^{2m}}{(2m)!}e^{-t/d}
           \cdot\binom{2m}{m}2^{-2m}  \\
&= e^{-t/d}\sum_{m=0}^{\infty}\frac{(t/(2d))^{2m}}{(m!)^2}. 
\end{split}
\end{equation}

Recall that
\begin{equation*}
I_0(2z) = \sum_{m=0}^{\infty}\frac{z^{2m}}{(m!)^2},
\end{equation*}
where $I_0$ is a modified Bessel function of the first kind,
so for large values of $y$, $I_0$ has the following asymptotics, 
\begin{equation*}
I_0(y) \sim \frac{e^y}{\sqrt{2\pi y}}
\end{equation*}
and so by taking $y=2z=2t/(2d)=t/d$, it follows that there exists a constant $C>0$ not depending on $z$ such that  
\begin{equation} \label{new} 
\begin{aligned} 
\sum_{m=0}^{\infty}P(T_t=2m)P_z(W_{2m}=z)  
&\leq C e^{-t/d}\frac{e^{t/d}}{\sqrt{2\pi t/d}} \\
& \leq  Ct^{-1/2}, \quad \textrm{  for all } t\geq 1. 
\end{aligned} 
\end{equation}
From \eqref{split-j}--\eqref{j-1-1} and \eqref{new} it follows that 
\begin{equation*}   
\begin{aligned}
&\sum_{m=0}^{K_1N^2\log N}P(T_t=2m)P_z(S_{2m}=z) \leq C\log N t^{-1/2} + N^{-r}.
\end{aligned}
\end{equation*}
Using this bound together with \eqref{eq:prob-return} and \eqref{pois-bnd} we have  
\begin{equation}   \label{p-z-bnd} 
\begin{split}
P_z(Z_t=z) &\leq  C_1\log N t^{-1/2} +C_2 N^{-r} +  e^{-c N^2\log N} \\  
&\leq C(\log N t^{-1/2} +N^{-r}). 
 \end{split}
\end{equation}

From \eqref{proj-trans} and \eqref{p-z-bnd} we get  
\begin{equation}\label{small-t-b} 
\begin{aligned}
&|\left\langle(\mathbf{e}_x-\mathbf{e}_y),
         \mathcal{P}_t(\mathbf{e}_x-\mathbf{e}_y)\right\rangle |
\\
&\le 2 \left\langle\mathbf{e}_x,\mathcal{P}_t\mathbf{e}_x\right\rangle 
     + 2 \left\langle\mathbf{e}_y,\mathcal{P}_t\mathbf{e}_y\right\rangle \\
     &\leq C(d) (\log N t^{-d/2} +N^{-rd}) , \quad \textrm{  for all } 0\leq t\leq K_0N^2\log N. 
\end{aligned}
\end{equation}  
From \eqref{small-t-b} and by choosing $r$ sufficiently large we finally get, 
 \begin{equation}\label{int-small} 
\begin{aligned} 
\int_{1}^{K_0N^2\log N}  |\left\langle(\mathbf{e}_x-\mathbf{e}_y),
         \mathcal{P}_t(\mathbf{e}_x-\mathbf{e}_y)\right\rangle|dt & \leq   
            \begin{cases} C(\log N)^2 & \textrm{ if } d=2, \\
            C  &\textrm{ if } d\geq 3.  
         \end{cases} 
       \end{aligned} 
\end{equation}

\subsection{Estimation of the integrand in \eqref{eq:variance-integral} for 
large $t$}

Here we consider the case where $t\ge KN^2\log N$ with $K$ 
sufficiently  large. We again consider the one-dimensional projection of the 
continuous time random walk $Z$ reflected at $\pm N$, as in 
\eqref{eq:prob-return}. We expect that in this case there is a large 
probability of $Z$ reaching the boundary on or before time $t$, hence we 
expect the semigroup $\mathcal{P}_t$ to even out any given function. Thus we 
keep both $\mathbf{e}_x,\mathbf{e}_y$ in 
\eqref{eq:variance-integral}, use the fact that  
$\langle\mathbf{e}_x-\mathbf{e}_y,\mathcal{P}_t(\mathbf{e}_x-\mathbf{e}_y)\rangle$ 
mostly cancels out.  
We therefore show that $
P_x\left(Z_t=x\right)-P_y\left(Z_t=x\right)$ is small for large $t$ uniformly in $x,y$.  

We will use the coupling method in order to bound the difference in the probabilities above. To this end, we construct two i.i.d copies 
$Z^{(1)},Z^{(2)}$ of the process $Z$ with $Z^{(1)}_0=x$ and $Z^{(2)}_0=y$ on the same probability space. We seek a (random) coupling time 
$\tau$ such that if $t>\tau$ then $Z^{(1)}_t=Z^{(2)}_t$. In that case we 
would have
\begin{equation} \label{coupling-eq} 
\left|P_x\left(Z_t=x\right)-P_y\left(Z_t=x\right)\right| \le P(\tau> t).
\end{equation}

Next we derive an upper on $P(\tau>t)$. For any process $Y$, and for $z \in S_N^1$ define   
$$
 \tau^{Y}_{z} = \inf\{t \geq 0: Y_t =z\}.
$$
Without loss of generality assume that $x>y$. We observe that in this case we have for $Z^{(1)}$ and $Z^{(2)}$ as above that $\tau \leq \tau^{Z^{(1)}}_{-N}$.  
Using reflection and translation invariance we get
\begin{equation}\label{tau-bnd} 
\begin{aligned} 
P(\tau>t) &\leq P_x(\tau^{Z^{(1)}}_{-N} >t)  \\
&\leq P_{N-x} ( \tau^{Y}_{2N} >t) \\
&\leq P_{0} ( \tau^{Y}_{2N} >t), 
\end{aligned} 
\end{equation} 
where $Y=\{Y_t\}_{t\geq 0}$ is a continuous time simple random walk on $\mathbb{Z}$, reflected at $0$ with jumps rate similar to $Z^{(1)}$. 

Recall that $W$ as simple random walk on $\mathbb{Z}$ and $\{T_t\}_{t\geq0}$ is a Poisson process with intensity $1/d$. 
By Proposition 2.4.5 in \cite{Lawler2010} we get that there exists constants $C_1, C_2>0$ such that for all integer $n>0$ and any $r>0$ we have  
$$
P_0( \tau^{|W|}_{n}> r n^2 )  \leq C_1e^{-C_2r}.
$$
We therefore get  
\begin{equation}\label{tau-y-bnd} 
\begin{aligned} 
P_{0} ( \tau^{Y}_{2N} >t)& = \sum_{m=0}^{\infty}P(T_t=m)P_0( \tau^{|W|}_{2N}> m  )  \\
&\leq  C_1 \sum_{m=0}^{\infty}e^{-t/d}\frac{(t/d)^{m}}{(m)!}e^{-C_2mN^{-2}} \\ 
&=C_1 E[e^{-C_2N^{-2} T_t}] \\
&= C_1 \exp\left \{\frac{t}{d}(e^{-C_2N^{-2}  }-1) \right\} \\
&\leq \tilde C_1 e^{-\tilde C_2 tN^{-2}},   
\end{aligned}  
\end{equation} 
where we have used the expression for the characteristic function of $T_t$. 
Combining \eqref{coupling-eq}, \eqref{tau-bnd} and \eqref{tau-y-bnd} gives 
\begin{equation}  \label{prob-dif-b} 
\begin{split}
\left|P_x\left(Z_t=x\right)-P_y\left(Z_t=x\right)\right| 
     & \leq \tilde C_1 e^{-\tilde C_2 tN^{-2}},   \\
& \textrm{for all } x,y \in [-N,N], \ t>0. 
\end{split}
\end{equation} 
From \eqref{proj-trans} we get, 
\begin{equation}\label{large-t} 
\begin{aligned} 
\left|\left\langle\mathbf{e}_\x,\mathcal{P}_t\mathbf{e}_\x\right\rangle -\left\langle\mathbf{e}_\y,\mathcal{P}_t\mathbf{e}_x\right\rangle \right| 
&= \left| P_\x[\X_t=\x] - P_\y[\X_t=\x] \right|\\ 
&=\left|  \prod_{k=1}^d P_{x_k}(X^{(k)}_t=x_k)- \prod_{k=1}^d P_{y_k}(X^{(k)}_t=x_k)  \right| \\ 
&\leq   C(d) e^{-\tilde C_2 tN^{-2}},
\end{aligned} 
\end{equation}
where we used \eqref{prob-dif-b} and the triangle inequality in the last inequality.  

 Then by choosing $K_0$ large enough, we get from \eqref{large-t} that 
 \begin{equation}\label{int-large} 
\begin{aligned} 
\int_{K_0N^2\log N}^{\infty} |\left\langle(\mathbf{e}_\x-\mathbf{e}_\y),
         \mathcal{P}_t(\mathbf{e}_\x-\mathbf{e}_\y)\right\rangle|dt & \leq    C(d)\int_{K_0N^2\log N}^{\infty}  e^{-\tilde C_2 tN^{-2}}dt \\ 
&\leq C N^2 N^{-\tilde C_2 K_0} \\ 
& \leq \tilde C. 
       \end{aligned} 
\end{equation}
Finally from \eqref{int-small}, \eqref{int-large} and by noting that the integral on the right-hand side of \eqref{eq:variance-integral} is bounded trivially by a constant on the integration region $[0,1]$ we get the upper bound in Proposition \ref{lem-cov}. 
\end{proof}

 \appendix
\label{appendix}
\section{Some heuristic ideas} 
We introduce some heuristic ideas to shed light on our main results. 
We consider parameters $i,j\in\{-N,\ldots,N\}$, and let 
$\X_{i,j}\in\mathbb{R}^2$. We define the energy of for the field $\bold X$ as
\begin{equation*}
E_N(\X)=\frac{1}{2}\sum_{|(i,j)-(i',j')|=1}|\X_{i,j}-\X_{i',j'}|^2
\end{equation*}
Then we define $P_N$ to be a constant times $\exp(-E_N(\X))$.

For $\x\in\mathbb{R}^2$, let $\ell_N(\x)$ be the number of points $\X_{i,j}$
within distance $1/2$ of $\x$. We weight the probability $P_N$ by 
$\exp(-\gamma\mathcal{E}_N(\X))$ where
\begin{equation*}
\mathcal{E}_N(\X)=\int_{\mathbb{R}^2}\ell_N(\x)^2d\x.
\end{equation*}

Now we argue heuristically, and suppose that the $\X_{i,j}$ are evenly spread
in a ball of radius $R$. One way to spread them evenly is to let
\begin{equation*}
\X_{i,j}\approx\frac{R}{N}\cdot(i,j)
\end{equation*}
In that case,
\begin{equation*}
E_N(\X)\approx C\left(\frac{R}{N}\right)^2N^2=CR^2.
\end{equation*}
Also, with the hypothesis of even spreading, we have that either $\ell(\x)=0$
or
\begin{equation*}
\ell_N(\x)\approx C\frac{N^2}{R^2}
\end{equation*}
and then
\begin{equation*}
\mathcal{E}_N(\X)\approx C\left(\frac{N^2}{R^2}\right)^2R^2 
=C\frac{N^4}{R^2}
\end{equation*}

Equating $E_N(\X)$ and $\mathcal{E}_N(\X)$, we get
\begin{equation*}
R^2=\frac{N^4}{R^2}
\end{equation*}
and so 
\begin{equation*}
R=N.
\end{equation*}
\\
\section*{Acknowledgments}
We are very grateful to the Associate Editor and to the anonymous referee for careful reading of the manuscript and for a number of useful comments and suggestions that significantly improved this paper.
\\

\textbf{Funding.} The work of Carl Mueller is partially supported by the Simons grant 513424.
\medskip  \\
\textbf{Availability of data and material.} Not applicable.
\medskip \\
\textbf{ Compliance with ethical standards.} The authors have no conflicts of interest to declare that are relevant to the content of this article.
\medskip \\
\textbf{Code availability.} Not applicable.

%\bibliographystyle{plain}
%\printindex
%\bibliography{GFF}

\end{document}